\documentclass[11pt]{amsart}
\usepackage{amscd}
\usepackage{amsmath}
\usepackage{amsxtra}
\usepackage{amsfonts}
\usepackage{amssymb}

\oddsidemargin  0.0in
	\evensidemargin 0.0in
	\textwidth      6.5in
	\headheight     0.0in
	\topmargin      0.0in
 
\newtheorem{theorem}{Theorem}[section]

\newtheorem{lemma}[theorem]{Lemma}
\newtheorem{proposition}[theorem]{Proposition}

\theoremstyle{definition}
\newtheorem{definition}[theorem]{Definition}

\theoremstyle{remark}

\renewcommand{\theclaim}{\textup{\theclaim}}

\newtheorem*{acknowledgements}{Acknowledgements}

\numberwithin{equation}{section}

\def\openone

{\mathchoice

{\hbox{\upshape \small1\kern-3.3pt\normalsize1}}

{\hbox{\upshape \small1\kern-3.3pt\normalsize1}}

{\hbox{\upshape \tiny1\kern-2.3pt\SMALL1}}

{\hbox{\upshape \Tiny1\kern-2pt\tiny1}}}

\makeatletter

\newbox\ipbox

\newcommand{\ip}[2]{\left\langle #1\, , \,#2\right\rangle}
\newcommand{\diracb}[1]{\left\langle #1\mathrel{\mathchoice

{\setbox\ipbox=\hbox{$\displaystyle \left\langle\mathstrut
#1\right.$}

\vrule height\ht\ipbox width0.25pt depth\dp\ipbox}

{\setbox\ipbox=\hbox{$\textstyle \left\langle\mathstrut
#1\right.$}

\vrule height\ht\ipbox width0.25pt depth\dp\ipbox}

{\setbox\ipbox=\hbox{$\scriptstyle \left\langle\mathstrut
#1\right.$}

\vrule height\ht\ipbox width0.25pt depth\dp\ipbox}

{\setbox\ipbox=\hbox{$\scriptscriptstyle \left\langle\mathstrut
#1\right.$}

\vrule height\ht\ipbox width0.25pt depth\dp\ipbox}

}\right. }

\newcommand{\dirack}[1]{\left. \mathrel{\mathchoice

{\setbox\ipbox=\hbox{$\displaystyle \left.\mathstrut
#1\right\rangle$}

\vrule height\ht\ipbox width0.25pt depth\dp\ipbox}

{\setbox\ipbox=\hbox{$\textstyle \left.\mathstrut
#1\right\rangle$}

\vrule height\ht\ipbox width0.25pt depth\dp\ipbox}

{\setbox\ipbox=\hbox{$\scriptstyle \left.\mathstrut
#1\right\rangle$}

\vrule height\ht\ipbox width0.25pt depth\dp\ipbox}

{\setbox\ipbox=\hbox{$\scriptscriptstyle \left.\mathstrut
#1\right\rangle$}

\vrule height\ht\ipbox width0.25pt depth\dp\ipbox}

} #1\right\rangle}

\newcommand{\cj}[1]{\overline{#1}}

\newcommand{\bz}{\mathbb{Z}}

\newcommand{\br}{\mathbb{R}}

\newcommand{\bn}{\mathbb{N}}

\newcommand{\beq}{\begin{equation}}
\newcommand{\eeq}{\end{equation}}

\def\blfootnote{\xdef\@thefnmark{}\@footnotetext}


\renewcommand{\mod}{\operatorname{mod}}

\hyphenation{wave-lets}\hyphenation{ in-fi-nite}\hyphenation{ con-vo-lu-tion}

\input xy
\xyoption{all}
\usepackage{amssymb}





\def\-{^{-1}}








\begin{document}

\title[Hadamard triples generate self-affine spectral measures]{Hadamard triples generate self-affine spectral measures}
\author{Dorin Ervin Dutkay}

\address{[Dorin Ervin Dutkay] University of Central Florida\\
	Department of Mathematics\\
	4000 Central Florida Blvd.\\
	P.O. Box 161364\\
	Orlando, FL 32816-1364\\
U.S.A.\\} \email{Dorin.Dutkay@ucf.edu}

\author{John Haussermann}

\address{[John Haussermann] University of Central Florida\\
	Department of Mathematics\\
	4000 Central Florida Blvd.\\
	P.O. Box 161364\\
	Orlando, FL 32816-1364\\
U.S.A.\\} \email{jhaussermann@knights.ucf.edu}

\author{Chun-Kit Lai}

\address{[Chun-Kit Lai] Department of Mathematics, San Francisco State University,
1600 Holloway Avenue, San Francisco, CA 94132.}

 \email{cklai@sfsu.edu}

\thanks{}
\subjclass[2010]{Primary 42B05, 42A85, 28A25.}
\keywords{Hadamard triples, quasi-product form, self-affine sets, spectral measure}

\begin{abstract}
Let $R$ be an expanding matrix with integer entries and let $B,L$ be finite integer digit sets so that $(R,B,L)$ form a Hadamard triple on ${\br}^d$. We prove that the associated self-affine measure $\mu = \mu(R,B)$ is a spectral measure, which means it admits an orthonormal bases of exponential functions in $L^2(\mu)$. This settles a long-standing conjecture proposed by Jorgensen and Pedersen and studied by many other authors.
\end{abstract}
\maketitle \tableofcontents
\section{Introduction}
In 1974, Fuglede \cite{Fug74} was studying a question of Segal on the existence of {\it commuting} extensions of the partial differential operators on domains of $\br^d$. Fuglede proved that the domains $\Omega$ for which such extensions exist are exactly those with the property that there exists an orthogonal basis for $L^2(\Omega)$, with Lebesgue measure, formed with exponential functions $\{e^{2\pi i\ip{\lambda}{x}} :\lambda\in\Lambda\}$ where $\Lambda$ is some discrete subset of $\br^d$. Such sets were later called {\it spectral sets} and $\Lambda$ was called a {\it spectrum} for $\Omega$.

\medskip

In the same paper, Fuglede proposed his famous conjecture that claims that the spectral sets are exactly those that tile $\br^d$ by some translations. The conjecture was later proved to be false in dimension 5 or higher, by Tao \cite{Tao04} and then in dimension 3 or higher \cite{MR2159781,MR2237932,MR2264214,MR2267631}. At this moment, the conjecture is still open in dimensions 1 and 2.

\medskip

In 1998, while working on the Fuglede conjecture, Jorgensen and Pedersen \cite{JP98} asked a related question: what are the {\it measures} for which there exist orthogonal bases of exponential functions?

\medskip

Let $\mu$ be a compactly supported Borel probability measure on
${\br}^d$ and let $\langle\cdot,\cdot\rangle$ denote the standard inner product on $\br^d$. The measure $\mu$ is called a {\it spectral measure} if there
exists a countable set $\Lambda\subset {\mathbb R}^d$, called {\it spectrum} of the measure $\mu$, such  that the collection of exponential functions
$E(\Lambda): = \{e^{2\pi i \langle\lambda,x\rangle}:
\lambda\in\Lambda\}$ forms an orthonormal basis for $L^2(\mu)$. We define the Fourier transform of $\mu$  to be
 $$
\widehat{\mu}(\xi)= \int e^{-2\pi i \langle\xi,x\rangle}d\mu(x).
 $$

\medskip

In \cite{JP98}, Jorgensen and Pedersen made a surprising discovery: they constructed the first example of a singular, non-atomic spectral measure. The measure is the Hausdorff measure associated to a Cantor set, where the scaling factor is 4 and the digits are 0 and 2. They also proved that the usual Middle Third Cantor measure is non-spectral. Also, Strichartz proved in \cite{MR2279556} that the Fourier series associated to such spectral fractal measures can have much better convergence properties than their classical counterparts on the unit interval: Fourier series of continuous functions converge uniformly, Fourier series of $L^p$-functions converge in the $L^p$-norm.
 Since Jorgensen and Pedersen's discovery, many other examples of singular measures have been constructed, and various classes of fractal measures have been analyzed \cite[and references therein]{JP98,LaWa02,Str98,Str00,DJ06,MR3163581,MR3273183,MR3302160,MR3318656}. All these constructions have used the central idea of Hadamard matrices and {\it Hadamard triples} to construct the spectral singular measures by infinitely many iterations. It has been conjectured since Jorgensen and Pedersen's discovery  that {\it all Hadamard triples will generate spectral self-affine measures.} Let us recall all the necessary definitions below:

\begin{definition}\label{hada}
Let $R\in M_d({\mathbb Z})$ be an $d\times d$ expansive matrix (all eigenvalues have modulus strictly greater than 1) with integer entries. Let $B, L\subset{\mathbb Z}^d $ be  finite sets of integer vectors with $N:= \#B=\#L$ ($\#$ denotes the cardinality). We say that the system $(R,B,L)$ forms a {\it Hadamard triple} (or $(R^{-1}B, L)$ forms a {\it compatible pair} in \cite{LaWa02} ) if the matrix
\begin{equation}\label{Hadamard triples}
H=\frac{1}{\sqrt{N}}\left[e^{2\pi i \langle R^{-1}b,\ell\rangle}\right]_{\ell\in L, b\in B}
\end{equation}
is unitary, i.e., $H^*H = I$.
\end{definition}

%

\begin{definition}\label{defifs}

For a given expansive $d\times d$ integer matrix $R$ and a finite set of integer vectors $B$ with $\#B =: N$, we define the {\it affine iterated function system} (IFS) $\tau_b(x) = R^{-1}(x+b)$, $x\in \br^d, b\in B$. The {\it self-affine measure} (with equal weights) is the unique probability measure $\mu = \mu(R,B)$ satisfying
\begin{equation}\label{self-affine}
\mu(E) = \sum_{b\in B} \frac1N \mu (\tau_b^{-1} (E)),\mbox{ for all Borel subsets $E$ of $\br^d$.}
\end{equation}
This measure is supported on the {\it attractor} $T(R,B)$ which is the unique compact set that satisfies
$$
T(R,B)= \bigcup_{b\in B} \tau_b(T(R,B)).
$$
The set $T(R,B)$ is also called the {\it self-affine set} associated with the IFS. One can refer to \cite{Hut81} and \cite{Fal97} for a detailed exposition of the theory of iterated function systems. We say that $\mu = \mu(R,B)$ satisfies the {\it no overlap condition} if
$$
\mu(\tau_{b}(T(R,B))\cap \tau_{b'}(T(R,B)))=0, \ \forall b\neq b'\in B.
$$
We say that $B$ is {\it a simple digit set for $R$}  if distinct elements of $B$ are not congruent $(\mod R(\bz^d))$.
\end{definition}

It is well known that if $(R,B,L)$ forms a Hadamard triple, then $B$ must be a simple digit set for $R$ and $L$ must be a simple digit set for $R^T$.   Furthermore, we need only consider equal-weight measures since, in the case when the weights are not equal, the self-affine measures cannot admit any spectrum, by the no overlap condition (see Theorem \ref{thDL15} below) and \cite[Theorem 1.5]{DL14}.

\medskip

In this paper we prove that the conjecture proposed by Jorgensen and Pedersen is valid:

\begin{theorem}\label{thmain}
Let $(R,B,L)$ be a Hadamard triple. Then the self-affine measure $\mu(R,B)$ is spectral.
\end{theorem}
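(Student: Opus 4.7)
The plan is to construct an explicit orthonormal basis of exponentials $E(\Lambda) \subset L^2(\mu)$, where $\Lambda$ has the form
$$
\Lambda \;=\; \bigcup_{n \geq 0} \Bigl\{\, \ell_0 + (R^T)\ell_1 + \cdots + (R^T)^n \ell_n \;:\; \ell_j \in L_j \,\Bigr\},
$$
with each $L_j$ a digit set (not necessarily equal to $L$) chosen so that $(R,B,L_j)$ is again a Hadamard triple. Orthogonality of $E(\Lambda)$ in $L^2(\mu)$ is an immediate consequence of the Hadamard condition at each level, via the product formula $\widehat{\mu}(\xi) = \prod_{j\ge 1} m_B((R^T)^{-j}\xi)$ together with the fact that $m_B$ vanishes on $(R^T)^{-1}(L_j - L_j)\setminus\{0\}$: any two distinct $\lambda,\lambda' \in \Lambda$ differ at some level $j$, forcing $\widehat{\mu}(\lambda - \lambda') = 0$. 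The substantive content is \emph{completeness}, which by the Jorgensen--Pedersen criterion amounts to showing
$$
Q_\Lambda(\xi) \;:=\; \sum_{\lambda \in \Lambda} |\widehat{\mu}(\xi + \lambda)|^2 \;=\; 1 \qquad (\xi \in \br^d).
$$

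The natural attack on completeness is via the transfer (Ruelle) operator
$$
(Qh)(\xi) \;=\; \sum_{\ell \in L} \bigl|m_B((R^T)^{-1}(\xi + \ell))\bigr|^2 \, h((R^T)^{-1}(\xi + \ell)),
$$
for which $Q_\Lambda$ is a fixed point and $Q^n \mathbf{1}$ increases to $Q_\Lambda$. One reduces the problem to showing that the only nonnegative bounded $Q$-fixed point $h \leq 1$ with $h(0) = 1$ is $h \equiv 1$. Equivalently, the ``bad set'' $Z := \{\xi : Q_\Lambda(\xi) < 1\}$ must be empty. From the formula for $Q$, the set $Z$ is necessarily forward-invariant under the branched contractions $\tau_\ell^*(\xi) = (R^T)^{-1}(\xi + \ell)$, $\ell \in L$, and is concentrated on the cumulative zero variety of $\widehat{\mu}$; this is the framework set up by Jorgensen--Pedersen, Strichartz, \L aba--Wang and Dutkay--Jorgensen, where the conjecture has been verified under additional hypotheses that rule out a nonempty $Z$ a priori.

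The main obstacle --- and why the conjecture has resisted proof for two decades --- is that for certain Hadamard triples the bad set $Z$ attached to the naive choice $L_j \equiv L$ is genuinely nonempty, so the standard Jorgensen--Pedersen spectrum fails. The decisive step in my approach is a structural theorem classifying invariant compacta $Z$ for $\{\tau_\ell^*\}$: such $Z$ must lie in a finite union of rational affine subspaces, along which the Hadamard triple splits into lower-dimensional Hadamard triples, a decomposition that one would call the \emph{quasi-product form} signaled in the keywords. Once the quasi-product decomposition is secured, one exploits the flexibility of choosing $L_j \neq L$: on each invariant component one replaces $L$ by an alternative digit set $L_j$ --- still forming a Hadamard triple with $(R, B)$ --- whose orbit under $\{\tau_\ell^*\}$ is transverse to the offending subspace. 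Patching these choices across levels produces digit sets $L_j$ with $Z = \emptyset$ and hence $Q_\Lambda \equiv 1$.

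The hardest piece is the structural classification of $Z$: proving that every forward-invariant compactum contained in the zero set of $\widehat{\mu}$ admits a quasi-product description, a Fourier-analytic analogue of Furstenberg-type rigidity for $\times R^T$-invariant subsets, sharpened by the combinatorial constraint imposed by the Hadamard matrix. Once that is in place, the dichotomy between orthogonality (which persists under any Hadamard-compatible modification of $L_j$) and completeness (which is restored by the modification) reduces to a lower-dimensional problem and can be handled by induction on $d$, with the base case $d = 1$ already known from earlier work.
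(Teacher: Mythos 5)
There is a genuine gap, and it sits exactly at the point your sketch identifies as the ``decisive step.'' Your plan is to keep the spectrum in the iterated form $\Lambda=\bigcup_n\{\ell_0+R^T\ell_1+\cdots+(R^T)^n\ell_n:\ \ell_j\in L_j\}$ and to restore completeness, when the bad set is nonempty, by re-choosing the digit sets $L_j$ among sets for which $(R,B,L_j)$ is still a Hadamard triple. But a Hadamard triple requires $L_j\subset\bz^d$, so any such $\Lambda$ lies in $\bz^d$; and the genuine obstruction in the hard case is the set $\mathcal Z=\{\xi:\widehat\mu(\xi+k)=0\ \forall k\in\bz^d\}$, which does not depend on the choice of the $L_j$. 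If $\mathcal Z\neq\emptyset$, then for \emph{every} $\Lambda\subset\bz^d$ one has $\sum_{\lambda\in\Lambda}|\widehat\mu(\xi+\lambda)|^2=0$ at each $\xi\in\mathcal Z$, so the Jorgensen--Pedersen sum cannot be identically $1$: no amount of ``patching the $L_j$ across levels'' can empty the bad set, because the failure is an obstruction to \emph{any} integer spectrum, not to a particular tower. (Examples with $\mathcal Z\neq\emptyset$ that are nevertheless spectral exist, but their spectra are necessarily not contained in $\bz^d$.) This is precisely why the paper abandons the iterated-tower construction in the second component: after conjugating to the quasi-product form, the spectrum is taken to be $\Lambda_1\times\Gamma_2$, where $\Gamma_2$ is the dual of a lattice $\tilde\Gamma_2$ that tiles $\br^{d-r+1}$ (together with $\bz$) by the self-affine tile $T(R^\dagger,B^\dagger)$; $\Gamma_2$ is in general a strictly finer, non-integer rational lattice, obtained from the Lagarias--Wang tiling theorem plus Fuglede's theorem for lattice tilings, not from a modified Hadamard tower.

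A second, related gap is in your closing sentence: the reduction is not simply ``a lower-dimensional problem handled by induction.'' In the quasi-product form only the first component $(R_1,\pi_1(B),L_1(\ell_2))$ is again a Hadamard triple to which the induction hypothesis applies. The fiber measures $\mu^2_x$ in the second component are Moran-type infinite convolutions whose digits $B_2(i_k)$ vary along the itinerary of $x$; they are not self-affine measures, so the induction hypothesis says nothing about them. The paper handles them by showing the fiber digits are complete residue systems $(\mod R_2(\bz^{d-r}))$, so the auxiliary measure $\mu^\dagger$ is normalized Lebesgue measure on a self-affine tile, and then transfers the lattice spectrum $\bz\times\Gamma_2$ to a common spectrum $\Gamma_2$ for $\mu_1$-a.e.\ fiber via a disintegration argument. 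Your structural claim that a nonempty invariant zero set forces a quasi-product splitting is indeed the right intermediate statement (the paper proves it via the invariant-set theorem of Cerveau--Conze--Raugi and a delicate analysis showing the invariant rational subspace is nontrivial), but as written it is asserted rather than proved, and even granting it, the endgame you propose does not yield completeness.
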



\medskip

In dimension 1, Theorem  \ref{thmain} was first proved by Laba and Wang \cite{LaWa02} and refined in \cite{DJ06}. The situation becomes more complicated when $d>1$. Dutkay and Jorgensen showed that the conjecture  is true if $(R,B,L)$ satisfies a technical condition called {\it reducibility condition} \cite{DJ07d}. There are some other additional assumptions proposed by Strichartz guaranteeing Theorem \ref{thmain} is true \cite{Str98,Str00}. Some low-dimensional special cases were also considered by Li \cite{MR3163581,MR3302160}. In \cite{DL15}, we introduced the following set
\begin{equation}\label{Z}
{\mathcal Z} = \{\xi: \widehat{\mu}(\xi+k)=0, \ \mbox{for all} \ k\in{\mathbb Z}^d\}
\end{equation}
and proved

\begin{theorem}\label{thDL15} \cite[Theorem 1.7 and 1.8]{DL15}
Let $(R,B,L)$ be a Hadamard triple and $\mu=\mu(R,B)$ be the associated equal weight self-affine measure. Then we have:

\medskip

(i) $\mu$ has the no-overlap condition.

\medskip

(ii) Suppose furthermore that, ${\mathcal Z} = \emptyset$, then $\mu$ is a spectral measure with a spectrum in ${\mathbb Z}^d$.

\end{theorem}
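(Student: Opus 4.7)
The plan rests on two identities that follow directly from the Hadamard condition. First, iterating \eqref{self-affine} yields the functional equation
$$\hat\mu(\xi)=M_B(R^{-T}\xi)\,\hat\mu(R^{-T}\xi),\qquad M_B(\xi):=\frac{1}{N}\sum_{b\in B}e^{-2\pi i\langle b,\xi\rangle}.$$
Second, the unitarity of $H$ gives the Parseval-type identity
$$\sum_{\ell\in L}|M_B(R^{-T}(\xi+\ell))|^2\equiv 1\qquad(\xi\in\br^d),$$
together with the vanishing $M_B(R^{-T}(\ell-\ell'))=0$ for $\ell\neq\ell'\in L$. Combined with the functional equation, this vanishing shows that $\{e^{2\pi i\langle\ell,\cdot\rangle}:\ell\in L\}$ is an orthonormal family in $L^2(\mu)$.

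For part (i), I pass to the iterated Hadamard triple $(R^n,B_n,L_n)$ with $B_n=\sum_{k=0}^{n-1}R^kB$ and $L_n=\sum_{k=0}^{n-1}(R^T)^kL$; it is routine to check $\mu=\mu(R^n,B_n)$ and that $(R^n,B_n,L_n)$ is itself a Hadamard triple of cardinality $N^n$. Applying the previous paragraph at level $n$ produces $N^n$ orthonormal exponentials $\{e_\lambda\}_{\lambda\in L_n}$ in $L^2(\mu)$. The $n$-fold invariance gives the lower bound $\mu(\tau_\omega(T))\geq N^{-n}$ for every word $\omega\in B^n$. To obtain the matching upper bound, I would compute the Fourier coefficients $\langle\mathbf{1}_{\tau_\omega(T)},e_\lambda\rangle_{L^2(\mu)}$ using the change-of-variables implicit in the invariance identity, then apply Bessel's inequality together with the discrete Plancherel identity on $\bc^{N^n}$ provided by the unitary iterated Hadamard matrix $H_n$; summing over $\omega$ forces $\sum_\omega\mu(\tau_\omega(T))\leq 1$, hence $\mu(\tau_\omega(T))=N^{-n}$ for every $\omega$. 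Taking $n=1$ gives the no-overlap condition.

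For part (ii), the candidate spectrum is the nested union (arranging $0\in L$ by a translation so the sequence nests)
$$\Lambda_0:=\{0\},\qquad\Lambda_n:=L+R^T\Lambda_{n-1}\subset\bz^d,\qquad\Lambda:=\bigcup_n\Lambda_n.$$
An induction using the $\bz^d$-periodicity of $M_B$ and the Hadamard vanishing shows $E(\Lambda_n)$ is orthonormal in $L^2(\mu)$ for every $n$, and therefore so is $E(\Lambda)$. Completeness amounts to the Parseval identity $Q_\Lambda(\xi):=\sum_{\lambda\in\Lambda}|\hat\mu(\xi+\lambda)|^2\equiv 1$. Setting $Q_n(\xi):=\sum_{\lambda\in\Lambda_n}|\hat\mu(\xi+\lambda)|^2$, the functional equation combined with the Parseval-type identity yields $Q_n=\mathcal L^n|\hat\mu|^2$ for the transfer operator
$(\mathcal Lg)(\xi):=\sum_{\ell\in L}|M_B(R^{-T}(\xi+\ell))|^2\,g(R^{-T}(\xi+\ell)),$
which satisfies $\mathcal L 1=1$. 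Monotone convergence yields $Q_n\uparrow Q_\Lambda\leq 1$ with $\mathcal LQ_\Lambda=Q_\Lambda$.

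The decisive step, and the sole place where the hypothesis $\mathcal Z=\emptyset$ enters, is to upgrade $Q_\Lambda\leq 1$ to $Q_\Lambda\equiv 1$. This reduces to a Perron-Frobenius statement for $\mathcal L$ on the torus $\br^d/\bz^d$: every bounded nonnegative fixed point of $\mathcal L$ is constant. Granted this, $Q_\Lambda$ is constant, and since $0\in\Lambda$ and $\hat\mu(0)=1$ we have $Q_\Lambda(0)\geq 1$, forcing $Q_\Lambda\equiv 1$. To prove the rigidity, I would set $g:=1-Q_\Lambda\geq 0$, a $\bz^d$-periodic upper-semicontinuous fixed point of $\mathcal L$ (as $1$ minus a monotone limit of continuous functions); $g$ attains its supremum $s$ on the compact torus, and the convex-combination structure $\mathcal Lg=g$ (weights summing to $1$) propagates the value $s$ along every admissible branch $\xi_0\mapsto R^{-T}(\xi_0+\ell)$, i.e.\ those for which $M_B(R^{-T}(\xi_0+\ell))\neq 0$. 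Consequently $\{g=s\}$ is closed and invariant under the admissible inverse-branch system on the torus. The hypothesis $\mathcal Z=\emptyset$ is then used to rule out a proper closed invariant set by extracting from such a set a point $\eta$ where $\hat\mu(\eta+k)=0$ for all $k\in\bz^d$, contradicting $\mathcal Z=\emptyset$ and forcing $s=0$. The principal obstacle is this last step: translating $\mathcal Z=\emptyset$ into sufficient irreducibility of the admissible dual dynamics, which requires careful analysis of the admissibility pattern (where $|M_B|^2$ vanishes) and its interaction with the zero set of $\hat\mu$ on each translate $\xi+\bz^d$.
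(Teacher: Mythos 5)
First, a point of comparison: the paper does not prove this statement at all --- it is imported verbatim from \cite[Theorems 1.7 and 1.8]{DL15} --- so your attempt can only be measured against that reference and against the machinery the present paper builds around it. Your scaffolding is the standard one and is correct as far as it goes: the Hadamard condition does yield the Parseval identity \eqref{eqqmf}, the vanishing $M_B((R^T)^{-1}(\ell-\ell'))=0$ and hence orthogonality of $E(L)$, the iterated triples $(R^n,B_n,L_n)$ are again Hadamard, $E(\Lambda_n)$ is orthonormal, and $Q_n=\mathcal L^nQ_0$ with $\mathcal L1=1$. But both halves of the argument stop exactly where the content of the theorem begins.

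For (i), the step ``Bessel plus discrete Plancherel forces $\sum_\omega\mu(\tau_\omega(T))\le 1$'' is circular. Expanding $\langle\mathbf 1_{\tau_\omega(T)},e_\lambda\rangle$ by the invariance identity gives
$N^{-n}\sum_{\omega'}e^{-2\pi i\langle\lambda,R^{-n}b_{\omega'}\rangle}\int_T\mathbf 1_{\tau_\omega(T)}(\tau_{\omega'}y)\,e^{-2\pi i\langle (R^T)^{-n}\lambda,y\rangle}\,d\mu(y)$,
and the terms with $\omega'\neq\omega$ live on the sets $\{y\in T:\tau_{\omega'}(y)\in\tau_\omega(T)\}$, whose measures are precisely the overlaps you are trying to kill. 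Unless those terms vanish, the coefficient vector is not ``a row of the unitary matrix $H_n$ times $\widehat\mu((R^T)^{-n}\lambda)$,'' so the discrete Plancherel identity does not apply; and Bessel bounds $\sum_\lambda|\langle\mathbf 1_{\tau_\omega(T)},e_\lambda\rangle|^2$ \emph{above} by $\mu(\tau_\omega(T))$, which is the wrong direction for the upper bound you need. For (ii), you have honestly flagged the decisive step --- upgrading $Q_\Lambda\le 1$ to $Q_\Lambda\equiv 1$ using $\mathcal Z=\emptyset$ --- as unresolved, and that step \emph{is} the theorem. Moreover the route you sketch for it does not launch: $Q_\Lambda$ is not $\bz^d$-periodic (that would essentially require $\Lambda+\bz^d=\Lambda$, i.e.\ $\Lambda=\bz^d$, which already fails for the Jorgensen--Pedersen example), so $g=1-Q_\Lambda$ does not descend to the torus and its supremum need not be attained; the propagation-along-admissible-branches idea is sound, but the compactness you invoke is not available. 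What \cite{DL15} actually does at this juncture is the Cerveau--Conze--Raugi invariant-set analysis reproduced in Section 2 of this paper (Theorem \ref{thccr} and Proposition \ref{pr1.13}): from a failure of completeness one produces a closed, $\bz^d$-periodic, $u_B$-invariant set on which the \emph{entire} function $\widehat\mu$ vanishes, and that is where a point of $\mathcal Z$ is extracted. In short, the proposal is a correct skeleton of the known strategy with the two essential steps --- the upper bound in (i) and the rigidity/irreducibility step in (ii) --- missing.
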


\medskip

The complete resolution of Theorem \ref{thmain} points towards the case ${\mathcal Z}\neq \emptyset$.
 It was found that there exist spectral self-affine measures with ${\mathcal Z}\neq \emptyset$ \cite[Example 5.4]{DL15}. To prove Theorem \ref{thmain}, our strategy is to first show that in the case when ${\mathcal Z}\neq \emptyset$ the digit set $B$ will be reduced a {\it quasi product-form} structure (Section 3). This requires an analysis of ${\mathcal Z}$ as an invariant set of some dynamical system, and we use the techniques in \cite{CCR} (Section 2). Our methods are also similar to the ones used in \cite{LW2}. However, as $B$ is not a complete set of representatives (mod $R({\mathbb Z}^d)$) (as it is in \cite{LW2}), several additional adjustments will be needed.  From the quasi-product form structure obtained, we construct the spectrum directly by induction on the dimension $d$ (Section 4).

\medskip

%
%
%
%
%

\section{Preliminaries}
We first discuss some preliminary reduction that we can perform in order to prove our main theorem.

\begin{definition}\label{defconj}
Let $R_1,R_2$ be $d\times d$ integer matrices, and the finite sets $B_1,B_2,L_1,L_2$ be in $\bz^d$.
We say that two triples $(R_1, B_1, L_1)$ and $(R_2, B_2, L_2)$ are {\it conjugate} (through the matrix $M$) if there exists an integer matrix $M$ such that $R_2 = MR_1M^{-1}$, $B_2 = MB_1$ and
$L_2 = (M^T)^{-1}L_1$.
\end{definition}

\medskip

\begin{proposition}\label{prconj}
Suppose that $(R_1, B_1, L_1)$ and $(R_2, B_2, L_2)$ are two conjugate triples, through the matrix $M$. Then
\medskip

(i) If $(R_1,B_1,L_1)$ is a Hadamard triple then so is $(R_2,B_2,L_2)$.

\medskip

(ii)The measure $\mu(R_1,B_1)$ is spectral with spectrum $\Lambda$ if and only if $\mu(R_2,B_2)$ is spectral with spectrum $(M^T)^{-1}\Lambda$.
\end{proposition}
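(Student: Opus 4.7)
The proposition is essentially a change of variables, so my plan is to chase the definitions carefully, with $M$ intertwining the two constructions.

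For part (i), I would verify that the Hadamard matrices for the two triples are literally the same. Using $R_2 = MR_1M^{-1}$, $b_2 = Mb_1$ and $\ell_2 = (M^T)^{-1}\ell_1$, a direct computation gives
\[
\langle R_2^{-1}b_2,\ell_2\rangle = \langle MR_1^{-1}M^{-1}Mb_1,(M^T)^{-1}\ell_1\rangle = \langle R_1^{-1}b_1,M^T(M^T)^{-1}\ell_1\rangle = \langle R_1^{-1}b_1,\ell_1\rangle.
\]
Since $M$ is invertible, the pairings $B_1\leftrightarrow B_2$ and $L_1\leftrightarrow L_2$ are bijective, so the matrix in \eqref{Hadamard triples} for $(R_2,B_2,L_2)$ is a row/column relabelling of the one for $(R_1,B_1,L_1)$, and therefore unitary.

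For part (ii), the main step is to show that $M$ pushes one self-affine measure to the other. A short computation with the IFS maps gives $\tau_{Mb_1}^{(R_2)}(Mx)=M\tau_{b_1}^{(R_1)}(x)$, so by uniqueness of the invariant set $T(R_2,B_2)=M\,T(R_1,B_1)$, and by uniqueness of the invariant measure $\mu_2=\mu_1\circ M^{-1}$, i.e., $\mu_2$ is the pushforward of $\mu_1$ under $M$. Taking Fourier transforms yields the key identity
\[
\widehat{\mu_2}(\xi)=\int e^{-2\pi i\langle\xi,Mx\rangle}\,d\mu_1(x)=\widehat{\mu_1}(M^T\xi).
\]

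Finally, I would transfer the orthogonality and completeness of exponential bases through the unitary $U\colon L^2(\mu_2)\to L^2(\mu_1)$ defined by $(Uf)(x)=f(Mx)$ (which is unitary precisely because $\mu_2=\mu_1\circ M^{-1}$). Under $U$, the exponential $e^{2\pi i\langle\lambda_2,\cdot\rangle}$ corresponds to $e^{2\pi i\langle M^T\lambda_2,\cdot\rangle}$. Setting $\Lambda_2=(M^T)^{-1}\Lambda$, orthogonality of $E(\Lambda_2)$ in $L^2(\mu_2)$ follows from the identity
\[
\int e^{2\pi i\langle\lambda_2-\lambda_2',y\rangle}\,d\mu_2(y)=\widehat{\mu_2}(-(\lambda_2-\lambda_2'))=\widehat{\mu_1}(-M^T(\lambda_2-\lambda_2')),
\]
which vanishes iff $M^T\lambda_2-M^T\lambda_2'\in\Lambda-\Lambda$ gives a vanishing $\widehat{\mu_1}$-pairing; completeness passes through $U$ the same way. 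The equivalence is therefore symmetric in the two triples.

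There is no real obstacle here: the whole content is bookkeeping around the intertwining identity $M^T\cdot(M^T)^{-1}=I$ and the covariance $\tau^{(R_2)}_{Mb}\circ M=M\circ\tau^{(R_1)}_b$. The only thing worth being careful about is distinguishing the action of $M$ on the "spatial" side from the action of $(M^T)^{-1}$ on the "frequency" side, so that the Hadamard pairing, the pushforward, and the spectrum all transform consistently.
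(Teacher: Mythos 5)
Your proposal is correct and is exactly the ``simple computations'' the paper delegates to its citation of \cite[Proposition 3.4]{DJ07d}: the entrywise identity $\langle R_2^{-1}Mb,(M^T)^{-1}\ell\rangle=\langle R_1^{-1}b,\ell\rangle$ for (i), and the pushforward relation $\mu(R_2,B_2)=\mu(R_1,B_1)\circ M^{-1}$ with $\widehat{\mu_2}(\xi)=\widehat{\mu_1}(M^T\xi)$ transferred through the unitary $f\mapsto f\circ M$ for (ii). No gaps; this is the same standard change-of-variables argument.
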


\begin{proof}
The proof follows from some simple computations, see e.g. \cite[Proposition 3.4]{DJ07d}.
\end{proof}

\medskip

Fora given integral expanding matrix $R$ and  a simple digit set $B$ for $R$. We let
 \begin{equation}\label{B_n}
B_n := B+RB+R^{2}B+...+R^{n-1}B = \left\{\sum_{j=0}^{n-1}R^jb_j: b_{j}\in B\right\}.
\end{equation}
	
  We define ${\mathbb Z}[R,B] $ to be the smallest $R$-invariant lattice containing all $B_n$ (invariant means $R(\bz[R,B])$$\subset\bz[R,B]$).   By Proposition \ref{przrb} below, to prove Theorem \ref{thmain}, there is no loss of generality if we assume that ${\mathbb Z}[R,B] = {\mathbb Z}^d$.

\begin{proposition}\label{przrb}
If the lattice $\bz[R,B]$ is not full-rank, then the dimension can be reduced; more precisely, there exists $1\leq r<d$ and a unimodular matrix $M\in GL(n,\bz)$ such that $M(B)\subset \bz^r\times\{0\}$ and
\begin{equation}
MRM^{-1}=\begin{bmatrix}
A_1& C\\
0& A_2
\end{bmatrix}
\label{eqzrb1}
\end{equation}
where $A_1\in M_r(\bz)$, $C\in M_{r,d-r}(\bz), A_2\in M_{d-r}(\bz)$. In addition,  $M(T(R,B))\subset \br^r\times \{0\}$ and the Hadamard triple $(R,B,L)$ is conjugate to the Hadamard triple $(MRM^{-1},MB,(M^T)^{-1}L)$, which is a triple of lower dimension.

\medskip

If the lattice $\bz[R,B]$ is full rank but not $\bz^d$, then  the system $(R,B,L)$ is conjugate to one $(\tilde R,\tilde B,\tilde L)$ for which $\bz[\tilde R,\tilde B]=\bz^d$. Moreover, $M$ is given by ${\mathbb Z}[R,B] = M({\mathbb Z}^d)$. 
\end{proposition}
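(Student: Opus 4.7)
The plan is to split into the two cases stated, reading off the conjugating matrix directly from the lattice $\bz[R,B]$ in each case, and then verifying the remaining claims by routine lattice/block-matrix computations.

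For the deficient-rank case, let $V$ be the $\Q$-span of $\bz[R,B]$, a rational subspace of $\Q^d$ of dimension $r<d$. Then $V\cap\bz^d$ is a primitive sublattice of $\bz^d$ (that is, $\bz^d/(V\cap\bz^d)$ is torsion-free), so any $\bz$-basis of $V\cap\bz^d$ extends to a $\bz$-basis of $\bz^d$; the corresponding change of coordinates is a unimodular $M\in GL_d(\bz)$ with $M(V\cap\bz^d)=\bz^r\times\{0\}$. Since $B\subseteq\bz[R,B]\subseteq V\cap\bz^d$, this forces $MB\subseteq\bz^r\times\{0\}$. The $R$-invariance of $\bz[R,B]$ gives $R(V)\subseteq V$, which when conjugated by $M$ is exactly the block structure \eqref{eqzrb1}. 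Because $R|_V$ is invertible and every $b\in B$ lies in $V$, each contraction $\tau_b$ preserves $V$, and writing a generic point of the attractor as $\sum_{n\geq 1}R^{-n}b_n$ shows $T(R,B)\subseteq V$, so $MT(R,B)\subseteq\br^r\times\{0\}$. Proposition \ref{prconj}(i) then gives the Hadamard property for the conjugate triple; moreover, since $(MRM^{-1})^{-1}$ preserves $\br^r\times\{0\}$, the Hadamard exponents $\langle(MRM^{-1})^{-1}Mb,(M^T)^{-1}l\rangle$ depend on $l$ only through its first $r$ coordinates. Unitarity then forces the projection $\pi_r$ onto those coordinates to be injective on $(M^T)^{-1}L$ (else two rows coincide), producing a bona fide $r$-dimensional Hadamard triple $(A_1,\pi_r(MB),\pi_r((M^T)^{-1}L))$.

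For the full-rank but non-$\bz^d$ case, choose any integer matrix $M$ whose columns form a $\bz$-basis of $\bz[R,B]$, so that $M(\bz^d)=\bz[R,B]$. Set $\tilde R=M^{-1}RM$, $\tilde B=M^{-1}B$, $\tilde L=M^TL$. The inclusion $B\subseteq M(\bz^d)$ yields $\tilde B\subseteq\bz^d$; $R$-invariance of $M(\bz^d)$ yields $\tilde R\in M_d(\bz)$; and $\tilde L\subseteq\bz^d$ is immediate since $M$ and $L$ are integer. Writing this as $(R,B,L)=(M\tilde RM^{-1},M\tilde B,(M^T)^{-1}\tilde L)$ exhibits the two triples as conjugate through the integer matrix $M$ in the sense of Definition \ref{defconj}, so Proposition \ref{prconj}(i) transfers the Hadamard property. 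For the lattice claim, a sublattice $\Gamma\subseteq\bz^d$ is $\tilde R$-invariant iff $M\Gamma\subseteq\bz^d$ is $R$-invariant, and $\tilde B_n=M^{-1}B_n$, so minimality gives $\bz[\tilde R,\tilde B]=M^{-1}\bz[R,B]=\bz^d$.

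The main obstacle is essentially organizational, centered on the asymmetry of Definition \ref{defconj}, which demands an \emph{integer} conjugating matrix. In the deficient-rank case the matrix $M$ is unimodular and so invertible in $M_d(\bz)$, causing no difficulty; but in the full-rank case the natural substitution is by the non-integer $M^{-1}$, and one must be careful to set up the argument so that $M$ (which \emph{is} integer) conjugates the \emph{new} triple to the original rather than the reverse direction. Once this orientation is fixed, the remaining steps—the block-triangular form of $MRM^{-1}$, the invariance of the attractor, and the minimality yielding $\bz[\tilde R,\tilde B]=\bz^d$—are all direct consequences of the $R$-invariance of $\bz[R,B]$.
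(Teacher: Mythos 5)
Your proof is correct and follows essentially the same route as the argument the paper relies on (the paper itself only cites Proposition 4.1 of \cite{DL15}): complete a $\bz$-basis of the primitive sublattice $V\cap\bz^d$ (resp.\ take $M$ with $M(\bz^d)=\bz[R,B]$), read off the block-triangular form, $MB\subset\bz^r\times\{0\}$ and $M(T(R,B))\subset\br^r\times\{0\}$ from the $R$-invariance of $\bz[R,B]$, and transfer the Hadamard property by conjugation, with the minimality argument giving $\bz[\tilde R,\tilde B]=\bz^d$. The only cosmetic point is that in the full-rank case Proposition \ref{prconj}(i) as literally stated gives the implication in the opposite direction; but since $\langle\tilde R^{-1}\tilde b,\tilde\ell\rangle=\langle R^{-1}b,\ell\rangle$ the two Hadamard matrices coincide entrywise, so the transfer you need is immediate — which is exactly the orientation issue you flag and resolve.
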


\begin{proof}
See Proposition 4.1 in \cite{DL15}
\end{proof}

\medskip

In the following, we introduce the main technique that will be used. We start with the following definition.

\begin{definition}\label{definv}
Let $u\geq0$ be an entire function on $\br^d$, i.e., real analytic on $\br^d$. Let $\cj L$ be a simple digit set for $R^T$. Suppose that
\begin{equation}
\sum_{l\in \cj L}u((R^T)^{-1}(x+l))>0,\quad(x\in\br^d)
\label{eqdefinv1}
\end{equation}

A closed set $K$ in $\br^d$ is called {\it $u$-invariant (with respect to the system $(u, R^T,\cj L)$)} if, for all $x\in K$ and all $\ell\in\cj L$
$$
 u\left(((R^T)^{-1}(x+\ell)\right)>0 \ \Longrightarrow \ (R^{T})^{-1}(x+\ell)\in K.
 $$
We say that the transition, using $\ell$, from $x$ to $\tau_{\ell}(x)$ is possible, if $\ell\in \cj L$ and $u\left((R^{T})^{-1}(x+\ell)\right)>0$. We say that $K$ is ${\mathbb Z}^d$-periodic if $K+n=K$ for all $n\in{\mathbb Z}^d$.
\end{definition}

\medskip

We say that a subspace $W$ of ${\mathbb R}^d$ is a {\it rational subspace} if $W$ has a basis of vectors with rational components. The following theorem follows from Proposition 2.5, Theorem 2.8 and Theorem 3.3 in \cite{CCR}.
\begin{theorem}\label{thccr}
Let $\cj L$ be a complete set of representatives $(\mod R^T(\bz^d))$.
Let $u\geq0$ be an entire function on $\br^d$ and let $K$ be  a closed $u$-invariant $\bz^d$-periodic set different from $\br^d$. Suppose in addition that $g$ is an entire function which is zero on $K$. Then

\medskip

(i) there exists a point $x_0\in\br^d$, such that $(R^T)^mx_0\equiv x_0(\mod \ \bz^d)$ for some integer $m\geq1$, and

 \medskip

 (ii) a proper rational subspace $W$ (may equal $\{0\}$)  such that $R^{T}(W) = W$ and the union
$$
\mathcal S=\bigcup_{k=0}^{m-1}((R^T)^kx_0+W+\bz^d)
$$
is invariant and $g$ is zero on $\mathcal S$.

Moreover, all possible transitions from a point in $(R^T)^kx_0+W+\bz^d$, $1\leq k\leq m$, lead to a point in $(R^T)^{k-1}x_0+W+\bz^d$.
\end{theorem}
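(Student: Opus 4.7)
The plan is to descend to the torus $\mathbb{T}^d=\mathbb{R}^d/\mathbb{Z}^d$ and exploit the expanding dynamics of the toral endomorphism induced by $R^T$. Let $\pi:\mathbb{R}^d\to\mathbb{T}^d$ denote the projection and $\bar K=\pi(K)\subset\mathbb{T}^d$, a proper closed subset. Since $\cj L$ is a complete set of coset representatives modulo $R^T(\mathbb{Z}^d)$, the branches $\{\tau_\ell:\ell\in\cj L\}$ realize all $|\det R|$ preimages of a given point under $R^T:\mathbb{T}^d\to\mathbb{T}^d$. The $u$-invariance then reads: at every $\bar x\in\bar K$, each preimage at which $u>0$ (an ``allowed'' preimage) stays in $\bar K$; the positivity hypothesis \eqref{eqdefinv1} guarantees at least one allowed preimage exists. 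Iterating yields, for every $\bar z\in\bar K$, an infinite backward trajectory $\bar z=\bar z_0,\bar z_1,\bar z_2,\ldots$ in $\bar K$ with $R^T\bar z_{n+1}=\bar z_n$.

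The first task is to produce the periodic point $x_0$. Fix a backward trajectory inside the compact set $\bar K$; by pigeonhole there exist indices $n_1<n_2$ with $\bar z_{n_1}$ and $\bar z_{n_2}$ arbitrarily close. Because $R^T$ is expansive, the inverse branch of $(R^T)^{n_2-n_1}$ that realizes the passage from $\bar z_{n_1}$ to $\bar z_{n_2}$ is a strict contraction of a small neighborhood of $\bar z_{n_1}$ into itself, and the Banach fixed point theorem yields a genuine $R^T$-periodic point of period $m\le n_2-n_1$ lying in $\bar K$ (closedness of $\bar K$ keeps the fixed point inside). Lifting gives $x_0\in\mathbb{R}^d$ with $(R^T)^m x_0\equiv x_0\pmod{\mathbb{Z}^d}$, and in particular $g(x_0)=0$ since $g$ vanishes on $K$.

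The second (and main) task is to extract the subspace $W$. The rough idea is to take $W$ to be the ``largest'' real linear subspace along which the analytic variety $\{g=0\}$ is translation-invariant at every level $(R^T)^k x_0+\mathbb{Z}^d$ of the candidate cycle. Real analyticity of $g$ makes this maximal subspace well defined, while compatibility with the $R^T$-cycle forces $R^T(W)\subseteq W$; invertibility of $R^T$ together with the finite-order periodicity of the orbit upgrade this to $R^T(W)=W$. The crux is rationality of $W$: if $W$ were irrational, Kronecker's theorem identifies the closure of $W+\mathbb{Z}^d$ in $\mathbb{R}^d$ with $V+\mathbb{Z}^d$ for a strictly larger rational subspace $V\supsetneq W$; continuity of $g$ then forces $g$ to vanish on $(R^T)^k x_0+V+\mathbb{Z}^d$ for each $k$, either contradicting the maximality of $W$ or allowing one to enlarge $W$ to $V$ and gain rationality. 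Hence $W$ may be taken rational, proper, and $R^T$-invariant.

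Finally, for $\mathcal{S}=\bigcup_{k=0}^{m-1}((R^T)^k x_0+W+\mathbb{Z}^d)$, the ``moreover'' clause is a direct computation: a point $y=(R^T)^k x_0+w+n\in\mathcal{S}$ with $w\in W$, $n\in\mathbb{Z}^d$ is sent by an allowed $\tau_\ell$ to
\[
\tau_\ell(y)=(R^T)^{-1}(y+\ell)=(R^T)^{k-1}x_0+(R^T)^{-1}w+(R^T)^{-1}(n+\ell);
\]
the first summand lies in $W$ by $R^T(W)=W$, and admissibility of the transition pins down $\ell$ so that $(R^T)^{-1}(n+\ell)\in W+\mathbb{Z}^d$, placing $\tau_\ell(y)$ in the $(k-1)$-level of $\mathcal{S}$ as required. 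That $g$ vanishes on $\mathcal{S}$ is built into the definition of $W$. I expect the rationality of $W$ in the third step to be the main obstacle: making precise how analyticity of $g$, $\mathbb{Z}^d$-periodicity of $K$, and expansiveness of $R^T$ conspire to rule out irrational $R^T$-invariant directions inside $\{g=0\}$ is the structural heart of the argument in \cite{CCR}.
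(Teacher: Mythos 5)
The paper does not actually prove this statement: it is imported wholesale from \cite{CCR} (Proposition 2.5, Theorem 2.8 and Theorem 3.3 there), so the benchmark your sketch has to meet is a complete proof of that structure theorem, and your proposal does not reach it. Two concrete gaps. First, in the periodic-point step, pigeonhole plus the contraction of an inverse branch does give an $R^T$-periodic point of the torus near $\overline{z}_{n_1}$ (after passing to an adapted norm in which $R^T$ is genuinely expanding), but your claim that ``closedness of $\overline{K}$ keeps the fixed point inside'' is unjustified: $u$-invariance only propagates along transitions that are \emph{possible}, i.e.\ where $u>0$ at the image, and when you iterate the single inverse branch that carried $\overline{z}_{n_1}$ to $\overline{z}_{n_2}$ there is no reason the subsequent transitions remain possible, so the iterates need not stay in $\overline{K}$ and the limit point need not lie in $\overline{K}$ at all. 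Second, and more seriously, the heart of the theorem is that the cycle $x_0,(R^T)x_0,\dots$ and the subspace $W$ can be chosen so that $\mathcal S=\bigcup_{k}((R^T)^kx_0+W+\bz^d)$ is itself $u$-invariant and every possible transition drops the level index by exactly one. Your definition of $W$ as the maximal direction of translation-invariance of the analytic variety $\{g=0\}$ along the cycle addresses only the vanishing of $g$ on $\mathcal S$; it gives no control on which transitions out of $\mathcal S$ are possible, and your ``direct computation'' for the moreover clause simply asserts that admissibility forces $(R^T)^{-1}(n+\ell)\in W+\bz^d$ and that the image sits in level $k-1$ rather than some other level --- precisely the statements that need proof. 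This dynamical structure (not the mere existence of a periodic point or of a maximal invariant direction of a zero set) is the content of the cited results of \cite{CCR}.

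The rationality step is likewise only a heuristic as written: enlarging $W$ to the rational subspace $V$ with $\overline{W+\bz^d}=V+\bz^d$ must be checked to preserve $R^T$-invariance, properness, and --- crucially --- the invariance of $\mathcal S$ and the transition structure, and one must rule out the enlargement destroying the maximality you used to define $W$. You correctly flag this as the main obstacle, but flagging it is not supplying it. In short, the outline is a reasonable reconstruction of the general strategy (backward orbits on $\bt^d$, periodic cycles, invariant affine directions, rational closure), but each of the three pillars --- the periodic point lying in the invariant set, the $u$-invariance of $\mathcal S$ with level-decreasing transitions, and the rationality of $W$ --- is asserted rather than proved, so the proposal does not constitute a proof of the theorem.
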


\medskip

Let $(R,B,L)$ be a Hadamard triple and we aim to apply Theorem \ref{thccr} to our set ${\mathcal Z}$ in \eqref{Z}. We define the function

\begin{equation}
u_B(x)=\left|\frac{1}{N}\sum_{b\in B}e^{2\pi i\ip{b}{x}}\right|^2,\quad(x\in\br^d).
\label{eq1.12.1}
\end{equation}
Taking the Fourier transform of the invariance equation \eqref{self-affine}, we can compute explicitly the Fourier transform of $\mu = \mu(R,B)$ as
\begin{equation}
|\widehat\mu(\xi)|^2=u_B((R^T)^{-1}\xi)|\widehat\mu((R^T)^{-1}(\xi))|^2,\quad(x\in\br^d).
\label{eq1.12.2}
\end{equation}
Iterating \eqref{eq1.12.2}, we obtain
\begin{equation}
|\widehat\mu(x)|^2=\prod_{n=1}^\infty u_B((R^T)^{-n}x),\quad(x\in\br^d),
\label{eq1.12.3}
\end{equation}
and the convergence in the product is uniform on compact sets. See e.g. \cite{DJ07d}. It is well known that both $u_B$ and $|\widehat{\mu}|^2$ are entire functions on ${\mathbb R}^d$.

\begin{proposition}\label{pr1.13}
Suppose that $(R,B,L)$ forms a Hadamard triple
and $\bz[R,B]=\bz^d$. Let $\cj L$ be a complete set of representatives $(\mod R^T(\bz^d))$ containing $L$. Suppose that the set
$$\mathcal Z:=\left\{\xi\in \br^d : \widehat\mu (\xi+k)=0\mbox{ for all }k\in\bz^d\right\}$$
is non-empty. Then

 \medskip

 (i) $\mathcal Z$ is $u_B$-invariant.

 \medskip
 (ii) There exist a point $x_0\in\br^d$ such that $(R^T)^mx_0\equiv x_0(\mod (R^T)\bz^d)$, for some integer $m\geq 1$.

  \medskip
  (iii) There exists a proper rational subspace $W\neq\{0\}$ of $\br^d$ such that $R^{T}(W)=W$ and the union
$$\mathcal S=\bigcup_{k=0}^{m-1}((R^T)^kx_0+W+\bz^d)$$
is $u_B$-invariant and is contained in $\mathcal Z$.

\medskip
Moreover, all possible transitions  from a point in $(R^T)^kx_0+W+\bz^d$, $1\leq k\leq m$, lead to a point in $(R^T)^{k-1}x_0+W+\bz^d$.

\end{proposition}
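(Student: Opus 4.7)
My strategy is to check the hypotheses of Theorem \ref{thccr} with $K = \mathcal Z$, $u = u_B$, and $g = \widehat\mu$, and then, using the assumption $\bz[R,B] = \bz^d$, strengthen the conclusion to $W \neq \{0\}$.

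First I verify (i). The set $\mathcal Z$ is closed by continuity of $\widehat\mu$, and $\bz^d$-periodic directly from its definition. To prove $u_B$-invariance, fix $\xi \in \mathcal Z$ and $\ell \in \cj L$ with $u_B((R^T)^{-1}(\xi+\ell)) > 0$, and set $\eta = (R^T)^{-1}(\xi + \ell)$. For arbitrary $k' \in \bz^d$, let $k = \ell + R^T k' \in \bz^d$, so that $(R^T)^{-1}(\xi + k) = \eta + k'$. Substituting into the functional equation \eqref{eq1.12.2} and using that $u_B$ is $\bz^d$-periodic (since $B \subset \bz^d$), one finds
\[
0 = |\widehat\mu(\xi + k)|^2 = u_B(\eta + k')\,|\widehat\mu(\eta + k')|^2 = u_B(\eta)\,|\widehat\mu(\eta + k')|^2.
\]
Since $u_B(\eta) > 0$, $\widehat\mu(\eta + k') = 0$; as $k'$ was arbitrary, $\eta \in \mathcal Z$.

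Next I check the remaining hypotheses of Theorem \ref{thccr}. The Fourier transform $\widehat\mu$ is entire because $\mu$ is compactly supported, and vanishes on $\mathcal Z$ (take $k = 0$ in the definition). Moreover $\mathcal Z \neq \br^d$ since $\widehat\mu(0) = 1$. The positivity hypothesis \eqref{eqdefinv1} will follow from the Hadamard identity
\[
\sum_{\ell \in L} u_B\bigl((R^T)^{-1}(x+\ell)\bigr) = 1 \qquad (x \in \br^d),
\]
which is a direct calculation from the unitarity of $H$: the cross terms $\sum_{\ell\in L} e^{2\pi i \ip{R^{-1}(b-b')}{\ell}}$ vanish for $b\neq b'$ by orthogonality of rows of $H$, and equal $N$ for $b = b'$. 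Since $L \subset \cj L$, the sum over $\cj L$ is likewise positive everywhere. Theorem \ref{thccr} then delivers (ii) and the version of (iii) in which $W$ is only known to be a (possibly trivial) proper rational $R^T$-invariant subspace.

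The main obstacle is to rule out $W = \{0\}$, and this is where $\bz[R,B] = \bz^d$ must be used. Assume for contradiction that $W = \{0\}$, so that $\mathcal S = \bigcup_{k=0}^{m-1}((R^T)^k x_0 + \bz^d)$ is a finite union of $\bz^d$-cosets contained in $\mathcal Z$, and the dynamics is purely cyclic mod $\bz^d$. Every admissible digit $\ell \in \cj L$ from the coset $(R^T)^k x_0 + \bz^d$ must land in $(R^T)^{k-1} x_0 + \bz^d$. Since $\widehat\mu$ vanishes on $\mathcal S$, the infinite product \eqref{eq1.12.3} together with the Hadamard identity above forces all the probability mass $u_B((R^T)^{-1}(\,\cdot\, + \ell))$ at each cycle point to be concentrated on the single admissible outgoing $\ell$. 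Expanding $u_B$ as $|N^{-1}\sum_b e^{2\pi i\ip{R^{-1}b}{\,\cdot\,}}|^2$ and using the orthogonality relations, this rigidity forces the digits of $B$ to lie in a proper $R^T$-invariant sublattice (or affine translate thereof) of $\bz^d$, contradicting $\bz[R,B] = \bz^d$. I expect this bookkeeping to be the delicate part of the argument, since in our setting $B$ is only a simple digit set rather than a complete set of representatives mod $R\bz^d$; the arguments of \cite{LW2} must therefore be adapted to accommodate the missing digits, as advertised in the introduction.
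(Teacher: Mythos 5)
Parts (i), the verification of the hypotheses of Theorem \ref{thccr}, and the derivation of (ii) and of the weak form of (iii) follow the paper's proof exactly and are fine. The genuine gap is at the one step that constitutes the real content of the proposition, namely ruling out $W=\{0\}$: here you give only a plan (``I expect this bookkeeping to be the delicate part of the argument''), not an argument. The paper closes this step by a concrete chain that your sketch does not supply: (a) at each cycle point $(R^T)^kx_0$ there is a \emph{unique} admissible digit in $\cj L$, because two admissible digits $\ell,\ell'$ would both send $(R^T)^kx_0$ into the coset $(R^T)^{k-1}x_0+\bz^d$, forcing $\ell\equiv\ell'\ (\mod R^T(\bz^d))$ and contradicting that $\cj L$ is a complete residue set; (b) since $\sum_{\ell\in L}u_B((R^T)^{-1}(x+\ell))=1$ and only one term can be positive, that term equals $1$, so by $\bz^d$-periodicity $u_B((R^T)^{k-1}x_0)=1$; (c) translating so that $0\in B$, equality in the triangle inequality gives $e^{2\pi i\ip{b}{(R^T)^{k-1}x_0}}=1$ for all $b\in B$ and all $1\leq k\leq m$, and the periodicity $(R^T)^mx_0\equiv x_0\ (\mod\bz^d)$ upgrades this to $\ip{R^kb}{x_0}\in\bz$ for all $k\geq 0$, i.e. $x_0\in\bz[R,B]^\perp$.

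Moreover, the endgame you announce is not quite the right one, and as stated it would not close the argument. The rigidity does not show that ``the digits of $B$ lie in a proper $R^T$-invariant sublattice''; what it shows is the dual statement that $x_0$ pairs integrally with the lattice $\bz[R,B]$. Using the hypothesis $\bz[R,B]=\bz^d$ this forces $x_0\in\bz^d$, and the contradiction is then obtained from $x_0\in\mathcal Z$, since $\widehat\mu(x_0-x_0)=\widehat\mu(0)=1\neq 0$. Your formulation omits precisely this last case: if $x_0$ happens to be an integer vector, no ``proper sublattice containing $B$'' arises at all, so a contradiction with $\bz[R,B]=\bz^d$ cannot be reached along the route you describe; the membership $x_0\in\mathcal Z$ must be invoked. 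Also note that the concentration of mass on a single outgoing digit comes from the transition structure of Theorem \ref{thccr} together with $\cj L$ being a complete residue set, not from the infinite product \eqref{eq1.12.3}, which plays no role in this step. Incidentally, there is no need to adapt the tiling arguments of \cite{LW2} here; that machinery enters only later, in the quasi-product analysis of Section 4.
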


\begin{proof}

We first prove  that $\mathcal Z$ is $u_B$-invariant. Take $x\in\mathcal Z$ and $\ell\in\cj L$ such that $u_B((R^T)^{-1}(x+\ell))>0$. Let $k\in\bz^d$. We have, with \eqref{eq1.12.2},
$$0=|\widehat\mu(x+\ell+R^Tk)|^2=u_B((R^T)^{-1}(x+\ell+R^Tk))|\widehat\mu((R^T)^{-1}(x+\ell+R^Tk))|^2$$$$=u_B((R^T)^{-1}(x+\ell))|\widehat\mu((R^T)^{-1}(x+\ell)+k)|^2.$$
Therefore, $\widehat\mu((R^T)^{-1}(x+\ell)+k)=0$ for all $k\in\bz^d$. So $(R^T)^{-1}(x+\ell)$ is in $\mathcal Z$, and this shows that $\mathcal Z$ is $u_B$-invariant.

Since $(R,B,L)$ form a Hadamard triple, by the Parseval identity, (see e.g. \cite{LaWa02,DJ07d}),
\begin{equation}\label{eqqmf}
\sum_{l\in L}u_B((R^T)^{-1}(x+l))=1,\quad(x\in\br^d),
\end{equation}
Hence,
\begin{equation}
\sum_{l\in \cj L}u_B((R^T)^{-1}(x+l))>0,\quad(x\in\br^d).
\label{eqqmf2}
\end{equation}

We can apply Theorem \ref{thccr} with $u=u_B$ and $g=\widehat\mu$ to obtain all other conclusions except the non-triviality of $W$. We now check that $W\neq\{0\}$. Suppose $W=\{0\}$. First we show that for $1\leq k\leq m$ there is a unique $\ell\in \cj L$ such that
$u_B((R^T)^{-1}((R^T)^kx_0+\ell)>0$. Equation \eqref{eqqmf2} shows that there exists at least one such $\ell$. Assume that we have two different $\ell$ and $\ell'$ in $\cj L$ with this property. Then the transitions are possible, so
\begin{equation}\label{eq1.12}
(R^T)^{-1}((R^T)^kx_0+\ell)\equiv (R^T)^{k-1}x_0\equiv (R^T)^{-1}((R^T)^kx_0+\ell') \ (\mod  \ (\bz^d)).
\end{equation}
But then $\ell\equiv \ell'(\mod R^T(\bz^d))$ and this is impossible since $\cj L$ is a complete set of representatives.

\medskip

By a translation, we can assume $0\in B$. From \eqref{eqqmf}, and since the elements in $L$ are distinct $(\mod R^T(\bz^d))$, we see that there is exactly one $\ell_k\in L$ such that $u_B((R^T)^{-1}((R^T)^kx_0+\ell_k))>0$. Therefore $u_B((R^T)^{-1}((R^T)^kx_0+\ell_k))=1$. But then, by \eqref{eq1.12}, $u_B((R^T)^{k-1}x_0)=1$. We have
$$
\left|\sum_{b\in B}e^{2\pi i \ip{b}{(R^{T})^{k-1}x_0}}\right| = N.
$$
 As $\#B = N$ and $0\in B$, we have equality in the triangle inequality, and we get that $e^{2\pi i \ip{b}{(R^T)^kx_0}}=1$ for all $b\in B$. Then $\ip{R^{k-1}b}{x_0}\in{\mathbb Z}$ for all $b\in B$, $1\leq k\leq m$. Because $(R^T)^m x_0\equiv x_0(\mod\bz^d)$, we get that $\ip{R^kb}{x_0}\in{\mathbb Z}$ for all $k\geq 0$ and thus
$$x_0\in\bz[R,B]^\perp:=\{x\in\br^d : \ip{\lambda}{x}\in\bz\mbox{ for all }\lambda\in \bz[R,B]\}.$$ Since $\bz[R,B]=\bz^d$, this means that $x_0\in \bz^d$. But $x_0\in\mathcal Z$, so $1=\widehat\mu(0)=\widehat\mu(x_0-x_0)=0$, which is a contradiction. This shows $W\neq\{0\}$.
\end{proof}

\medskip

As $W\neq \{0\}$, we can conjugate $R$ through some $M$ so that $(R,B,L)$ has a much more regular structure.

\begin{proposition}\label{pr1.14}
Suppose that $(R,B,L)$ forms a Hadamard triple and ${\mathbb Z}[R,B] = {\mathbb Z}^d$ and let $\mu = \mu(R,B)$ be the associated self-affine measure $\mu=\mu(R,B)$. Suppose that the set

$$\mathcal Z:=\left\{x\in \br^d : \widehat\mu (x+k)=0\mbox{ for all }k\in\bz^d\right\},$$
is non-empty. Then there exists an integer matrix $M$ with $\det M=1$ such that the following assertions hold:

\begin{enumerate}
	\item The matrix $\tilde R:=MRM^{-1}$ is of the form
	\begin{equation}
	\tilde R=\begin{bmatrix} R_1&0\\ C &R_2\end{bmatrix},
	\label{eq1.14.1}
	\end{equation}
	with $R_1\in M_r(\bz)$, $R_2\in M_{d-r}(\bz)$ expansive integer matrices and  $C\in M_{(d-r)\times r}(\bz)$.
	\item If $\tilde B=MB$ and $\tilde L=(M^T)^{-1}L$, then $(\tilde R, \tilde B,\tilde L)$ is a Hadamard triple.
	\item The measure $\mu(R,B)$ is spectral with spectrum $\Lambda$ if and only if the measure $\mu(\tilde R,\tilde B)$ is spectral with spectrum $(M^T)^{-1}\Lambda$.
	\item There exists $y_0\in\br^{d-r}$ such that $(R_2^T)^my_0\equiv y_0(\mod (R_2^T)\bz^d)$ for some integer $m\geq 1$ such that the union
	$$\tilde {\mathcal S}=\bigcup_{k=0}^{m-1}(\br^r\times \{(R_2^T)^ky_0\}+\bz^d)$$
	is contained in the set
	$$\tilde{\mathcal Z}:=\left\{x\in \br^d : \widehat{\tilde \mu} (x+k)=0\mbox{ for all }k\in\bz^d\right\},$$
	where $\tilde \mu=\mu(\tilde R,\tilde B)$. The set $\tilde{\mathcal S}$ is invariant (with respect to the system $(u_{\tilde B},\tilde R^T,\tilde {\cj L})$, where $\tilde {\cj L}$ is a complete set of representatives $(\mod \tilde R^T\bz^d)$. In addition, all possible transitions from a point in $\br^r\times \{(R_2^T)^ky_0\}+\bz^d$, $1\leq k\leq m$ leads to a point in $\br^r\times \{(R_2^T)^{k-1}y_0\}+\bz^d$.
\end{enumerate}
\end{proposition}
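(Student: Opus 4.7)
The starting point is Proposition \ref{pr1.13}, which produces a point $x_0$, an integer $m\geq 1$ with $(R^T)^m x_0\equiv x_0\pmod{\bz^d}$, and a proper nontrivial rational subspace $W\subseteq \br^d$ with $R^T W=W$, such that $\mathcal S=\bigcup_{k=0}^{m-1}((R^T)^k x_0+W+\bz^d)$ is $u_B$-invariant and contained in $\mathcal Z$. Let $r=\dim W\in\{1,\dots,d-1\}$. The plan is to conjugate the triple by a unimodular integer matrix $M$ that aligns $W$ with the coordinate subspace $\br^r\times\{0\}^{d-r}$, and then read off the four conclusions.

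For the construction of $M$, the key algebraic fact is that $W\cap\bz^d$ is a primitive sublattice of rank $r$ in $\bz^d$: if $nx\in W\cap\bz^d$ for some integer $n\neq 0$ and $x\in\bz^d$, then already $x\in W$, so the quotient $\bz^d/(W\cap\bz^d)$ is torsion-free. Hence $W\cap\bz^d$ is a direct summand of $\bz^d$, and I can choose a $\bz$-basis $v_1,\dots,v_r$ of $W\cap\bz^d$ and vectors $w_1,\dots,w_{d-r}$ completing it to a $\bz$-basis of $\bz^d$. Setting $M^T:=[v_1\mid\cdots\mid v_r\mid w_1\mid\cdots\mid w_{d-r}]$ (and negating one $w_j$ if necessary to enforce $\det M=1$) produces a unimodular integer matrix satisfying $M^T(\br^r\times\{0\}^{d-r})=W$, equivalently $(M^T)^{-1}W=\br^r\times\{0\}^{d-r}$.

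With $\tilde R=MRM^{-1}$, the relation $R^T W=W$ translates into $\tilde R^T=(M^T)^{-1}R^T M^T$ preserving $\br^r\times\{0\}^{d-r}$, which forces $\tilde R^T=\begin{bmatrix}R_1^T & *\\ 0 & R_2^T\end{bmatrix}$ and so $\tilde R$ has the required block lower triangular form with $R_1,R_2$ integer. Expansiveness of $R_1,R_2$ is automatic because their joint spectrum equals that of $\tilde R$, which equals that of $R$. Claims (2) and (3) are then immediate from Proposition \ref{prconj}.

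For (4), the change-of-variables identity $\widehat{\tilde\mu}(\xi)=\widehat{\mu}(M^T\xi)$ (from $\tilde\mu=M_*\mu$), combined with $(M^T)^{-1}\bz^d=\bz^d$, yields $\tilde{\mathcal Z}=(M^T)^{-1}\mathcal Z$, and hence $\tilde{\mathcal S}:=(M^T)^{-1}\mathcal S\subseteq\tilde{\mathcal Z}$. Writing $\tilde x_0=(M^T)^{-1}x_0=(u_0,y_0)$ in split coordinates, and using $(\tilde R^T)^k\tilde x_0=(M^T)^{-1}(R^T)^k x_0$ together with the block upper-triangular form of $\tilde R^T$, the second component of $(\tilde R^T)^k\tilde x_0$ is exactly $(R_2^T)^k y_0$, while the first component is harmlessly absorbed into the $\br^r\times\{0\}^{d-r}$ slot. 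This collapses $\tilde{\mathcal S}$ into the asserted form $\bigcup_{k=0}^{m-1}(\br^r\times\{(R_2^T)^k y_0\}+\bz^d)$. The $u_{\tilde B}$-invariance and the transition statement transfer from Proposition \ref{pr1.13} through $(M^T)^{-1}$ using the identity $u_{\tilde B}(\xi)=u_B(M^T\xi)$ and $\tilde{\overline{L}}=(M^T)^{-1}\overline{L}$. The main technical step is the primitive-sublattice/direct-summand argument producing an integer unimodular $M$ with the correct alignment and sign of determinant; once $M$ is in hand, everything else is bookkeeping that pushes Proposition \ref{pr1.13} through the conjugation.
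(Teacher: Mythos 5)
Your argument is correct and follows essentially the same route as the paper: invoke Proposition \ref{pr1.13}, align the $R^T$-invariant rational subspace $W$ with $\br^r\times\{0\}$ via a unimodular integer matrix, and then transfer (i)--(iv) through the conjugation using Proposition \ref{prconj} and the identities $\widehat{\tilde\mu}(\xi)=\widehat\mu(M^T\xi)$, $u_{\tilde B}(\xi)=u_B(M^T\xi)$. The only difference is that where the paper cites \cite[Theorem 4.1 and Corollary 4.3b]{Sch86} for the existence of such an $M$, you prove it directly by the primitive-sublattice/basis-extension argument (and you are more careful than the paper about the transpose bookkeeping, building $M^T$ from a basis adapted to $W$), which is a fine, self-contained substitute.
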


\begin{proof}
We use Proposition \ref{pr1.13} and we have $x_0$ and a rational subspace $W\neq\{0\}$ invariant for $R$ with all the mentioned properties. By \cite[Theorem 4.1 and Corollary 4.3b]{Sch86}, there exists an integer matrix $M$ with determinant 1 such that $MV=\br^r\times \{0\}$. The rest follows from Proposition \ref{pr1.13}, by conjugation, $y_0$ is the second component of $Mx_0$.

\end{proof}

\section{The quasi-product form}

From now on, we assume that $(R,B,L)$ satisfies all the properties of $(\tilde R,\tilde B,\tilde L)$ in Proposition \ref{pr1.14}. In this section, we will prove that if ${\mathcal Z}\neq \emptyset$, the Hadamard triple will be conjugate to a quasi-product form structure.

\medskip

We first introduce the following notations.

\begin{definition}\label{def1.15}
For a vector $x\in \br^d$, we write it as $x=(x^{(1)},x^{(2)})^T$ with $x^{(1)}\in \br^r$ and $x^{(2)}\in\br^{d-r}$. We denote by $\pi_1(x)=x^{(1)}$, $\pi_2(x)=x^{(2)}$. For a subset $A$ of $\br^d$, and $x_1\in\br^r$, $x_2\in \br^{d-r}$, we denote by
$$A_2({x_1}):=\{y\in\br^{d-r} :(x_1,y)^T\in A\},\quad A_1({x_2}):=\{x\in\br^r : (x,x_2)^T\in A\}.$$
\end{definition}

We also make a note on the notation. Throughout the rest of the paper,  we use $A\times B$ to denote the Cartesian product of $A$ and $B$ so that $A\times B = \{(a,b):a\in A, b\in B\}$.  Our main theorem in this section is as follows:

\begin{theorem}\label{th_quasi}
Suppose that $$R=\begin{bmatrix}
R_1& 0\\ C& R_2
\end{bmatrix},
$$$(R,B,L_0)$ is a Hadamard triple and $\mu = \mu(R,B)$ is the associated self-affine measure and ${\mathcal Z}\neq \emptyset$. Then the set $B$ has the following quasi-product form:
\begin{equation}
B=\left\{(u_i,v_i+Qc_{i,j})^T : 1\leq i\leq N_1, 1\leq j\leq|\det R_2|\right\},
\label{eq1.18.1}
\end{equation}
where
 \begin{enumerate}\item $N_1 = N/|\det R_2|$, \item $Q$ is a $(d-r)\times (d-r)$ integer matrix with $|\det Q|\geq 2$ and $R_2Q=Q\tilde R_2$ for some $(d-r)\times(d-r)$ integer matrix $\widetilde{R_2}$, \item the set $\{Qc_{i,j}: 1\leq j\leq |\det R_2|\}$ is a complete set of representatives $(\mod R_2(\bz^{d-r}))$, for all $1\leq i\leq N_1$.
\end{enumerate}
\medskip

Moreover, one can find some $L\equiv L_0 (\mod \ R^T(\mathbb Z^d))$ so that $(R,B,L)$ is a Hadamard triple and $(R_1,\pi_1(B), L_1(\ell_2))$ and $(R_2,B_2(b_1),\pi_2(L))$ are Hadamard triples on ${\mathbb R}^r$ and ${\mathbb R}^{d-r}$ respectively.
\end{theorem}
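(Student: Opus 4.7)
The plan is to extract the quasi-product structure of $B$ from the invariant slices provided by Proposition \ref{pr1.14}, and then to construct the promised $L$. Throughout, the key tools are the scaling identity $\widehat\mu(\xi)=m_B((R^T)^{-1}\xi)\,\widehat\mu((R^T)^{-1}\xi)$ with $m_B(x)=\tfrac{1}{N}\sum_{b\in B}e^{2\pi i\langle b,x\rangle}$, the block-triangular form of $R^T$, and the principle that a trigonometric polynomial vanishing on a non-empty open set vanishes identically.

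First I would exploit the vanishing of $\widehat\mu$ on each slice $H_k=\mathbb{R}^r\times\{(R_2^T)^k y_0\}+\mathbb{Z}^d$. For $\xi=(\xi^{(1)},y+n^{(2)})$ with $y=(R_2^T)^k y_0$ and $n^{(2)}\in\mathbb{Z}^{d-r}$, the scaling identity forces $m_B((R^T)^{-1}\xi)\,\widehat\mu((R^T)^{-1}\xi)=0$. The block structure of $R^T$ makes the second coordinate of $(R^T)^{-1}\xi$ depend only on $y+n^{(2)}$, and it lies in $\tilde{\mathcal S}$ iff $n^{(2)}\in R_2^T\mathbb{Z}^{d-r}$. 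For $n^{(2)}\notin R_2^T\mathbb{Z}^{d-r}$, I would argue by a combination of analytic continuation and the $u_B$-invariance/minimality of $\tilde{\mathcal S}$ that the trigonometric polynomial $\xi^{(1)}\mapsto m_B((R^T)^{-1}(\xi^{(1)},y+n^{(2)}))$ vanishes on an open set, hence identically in $\xi^{(1)}$. (If instead $\widehat\mu$ turned out to vanish on a new slice of zeros, one iterates the same dichotomy, using that the $R_2^T$-orbit modulo $\mathbb{Z}^{d-r}$ is finite-periodic to close the argument.)

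Writing $B=\bigsqcup_{i}\{u_i\}\times B_2(u_i)$ for the distinct $u_i\in\pi_1(B)$, a direct calculation shows that $m_B((R^T)^{-1}(\xi^{(1)},y+n^{(2)}))$ is a linear combination of the exponentials $e^{2\pi i\langle u_i,(R_1^T)^{-1}\xi^{(1)}\rangle}$ with coefficients involving $\sum_{v\in B_2(u_i)}e^{2\pi i\langle v,(R_2^T)^{-1}(y+n^{(2)})\rangle}$. Linear independence of these exponentials in $\xi^{(1)}$ then yields $\sum_{v\in B_2(u_i)}e^{2\pi i\langle v,(R_2^T)^{-1}(y+n^{(2)})\rangle}=0$ for every $i$, every slice $y$, and every $n^{(2)}$ in a nontrivial coset of $R_2^T\mathbb{Z}^{d-r}$. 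This is simultaneous vanishing of $m_{B_2(u_i)}$ at $|\det R_2|-1$ specific torus points, one per nontrivial coset. To pass from this vanishing to the concrete form $B_2(u_i)=v_i+\{Qc_{i,j}:1\leq j\leq|\det R_2|\}$, the matrix $Q$ should be built from the rational $R^T$-invariant subspace $W$ of Proposition \ref{pr1.13} together with the cycle relation $(R_2^T)^m y_0\equiv y_0\pmod{R_2^T\mathbb{Z}^{d-r}}$; concretely, $Q$ encodes how the orbit of $y_0$ generates a rank-$(d-r)$ sublattice of $\mathbb{Z}^{d-r}$ compatible with $R_2$, giving $R_2Q=Q\tilde R_2$ for some integer matrix $\tilde R_2$. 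Combining the mask vanishing with $\sum_i|B_2(u_i)|=N$ and a coset-counting argument should force $|B_2(u_i)|=|\det R_2|$ and $\{Qc_{i,j}\}$ to be a complete set of representatives mod $R_2\mathbb{Z}^{d-r}$, independent of $i$, giving $N_1=N/|\det R_2|$.

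For the final claim about $L$, I would use the forced second-coordinate structure to adjust $L_0$: the invariance pins down $\ell^{(2)}\pmod{R_2^T\mathbb{Z}^{d-r}}$ on each slice, so replacing each $\ell\in L_0$ by a suitable representative mod $R^T\mathbb{Z}^d$ produces an $L$ that splits compatibly with the quasi-product structure of $B$. The Hadamard identity $\sum_{\ell\in L}u_B((R^T)^{-1}(x+\ell))=1$, restricted to each slice type, then separates into lower-dimensional Parseval identities confirming that $(R_1,\pi_1(B),L_1(\ell_2))$ and $(R_2,B_2(b_1),\pi_2(L))$ are Hadamard triples. I expect the main obstacle to be producing the concrete quasi-product form itself: it is not enough to show $B$ has ``some'' product-like structure, since the claim is quite specific, requiring an integer matrix $Q$ with $R_2Q=Q\tilde R_2$, $|\det Q|\geq 2$, and the fibers $\{Qc_{i,j}\}$ being a \emph{complete} coset system uniformly in $i$. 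Bridging the analytic vanishing at finitely many torus points to this rigid lattice-theoretic conclusion will require a careful inductive use of the slice invariance (running transitions through the $m$-cycle) combined with Hadamard counting and the standing reduction $\mathbb{Z}[R,B]=\mathbb{Z}^d$.
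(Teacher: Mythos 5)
Your overall strategy (force the fiber masks of $B_2(u_i)$ to vanish at the nonzero dual cosets using the invariant slices, then extract a lattice structure) is the same as the paper's, but two steps that carry the real weight are not actually argued. First, your route to the identical vanishing of $\xi^{(1)}\mapsto\sum_{v\in B_2(u_i)}e^{2\pi i\ip{v}{(R_2^T)^{-1}(y_j+\ell_2)}}$ does not work as stated: from $\widehat\mu=0$ on a slice the refinement equation only gives, at each point, that \emph{either} the mask \emph{or} $\widehat\mu$ vanishes at the pulled-back point, and your proposal to resolve this by ``analytic continuation and minimality,'' iterating the dichotomy if $\widehat\mu$ vanishes on a new slice, is exactly the regress that the invariant-set machinery exists to close --- you never close it, and you have no open set of mask zeros to continue from. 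The paper avoids the dichotomy entirely: Proposition \ref{pr1.14}(iv) (from Theorem \ref{thccr}) says every \emph{possible} transition from $\br^r\times\{(R_2^T)^jy_0\}+\bz^d$ lands in the previous slice, which forces $\pi_2(\ell)\equiv 0\ (\mod R_2^T(\bz^{d-r}))$; after normalizing $L$ and $\cj L$ as in Lemma \ref{lem1.15-} (a step you also need but do not perform, and which is reused later for the ``$L\equiv L_0$'' claim) this gives $\pi_2(\ell)=0$ exactly (Lemma \ref{lem1.16}). Hence for every $\ell$ with $\pi_2(\ell)\neq 0$ one has $u_B((R^T)^{-1}((x,y_j)^T+\ell))=0$ for \emph{all} $x\in\br^r$, and linear independence of characters yields \eqref{eq1.17.1} directly.

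Second, the bridge from these finitely many vanishing points to the rigid conclusion --- which you yourself flag as ``the main obstacle'' --- is left unproved, and your proposed substitutes do not suffice. The paper's argument is finite-group Fourier analysis: the function $f(b_2)=e^{-2\pi i\ip{b_2}{(R_2^T)^{-1}y_j}}$ on $B_2(b_1)$, extended by $0$ to $\bz^{d-r}/R_2(\bz^{d-r})$, has $\widehat f$ supported at $\{0\}$ (this uses that $\pi_2(\cj L)$ is a \emph{complete} residue system mod $R_2^T(\bz^{d-r})$, Lemma \ref{lem1.15}(iii)), hence $f$ is constant; since $|f|\in\{0,1\}$ this forces $B_2(b_1)$ to be a full residue system mod $R_2(\bz^{d-r})$, and the equality case of the triangle inequality gives $\ip{b_2-b_2'}{(R_2^T)^{-1}y_j}\in\bz$. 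Only then is $Q$ defined, via $\Gamma=\{x\in\bz^{d-r}:\ip{x}{(R_2^T)^{-1}y_j}\in\bz,\ 1\leq j\leq m\}$: full rank because the $(R_2^T)^{-1}y_j$ are rational (the orbit is a cycle mod $\bz^{d-r}$), proper because $y_j\notin\bz^{d-r}$ (else $\widehat\mu(0)=0$, contradiction), and $R_2$-invariant, giving $|\det Q|\geq 2$ and $R_2Q=Q\widetilde{R_2}$. Your plan to build $Q$ ``from $W$ and the cycle relation'' plus ``coset counting'' yields none of properness, full rank, or the uniform completeness of the fibers. Similarly, the closing Hadamard claims need the counting chain $\#B_2(b_1)=|\det R_2|=\#\pi_2(L)$ and $\#L_1(\ell_2)=\#\pi_1(B)=N_1$ (Lemma \ref{lem1.18}) before the \emph{weighted} orthogonality relations of Lemma \ref{lem1.15} become genuine Hadamard triples; ``restricting the Parseval identity to slice types'' does not replace that counting.
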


The following lemma allows us to find a representative $L$ with certain injectivity property.

\begin{lemma}\label{lem1.15-}
Suppose that the Hadamard triple $(R,B,L)$ satisfies the properties (i) and (iv)  in Proposition \ref{pr1.14}. Then there exists set $L'$ and a complete set of representatives $\cj L'$ such that $(R,B,L')$ is a Hadamard triple and the following property holds: \begin{equation}\label{eq1.15.0}
\ell,\ell'\in L' \ \mbox{(or $\in \cj L'$) and} \ \pi_2(\ell)\equiv \pi_2(\ell') \ \left(\mod \  R_2^T(\bz^{d-r})\right) \ \ \Longrightarrow \  \ \pi_2(\ell)=\pi_2(\ell').
\end{equation}
\end{lemma}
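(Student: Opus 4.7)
The essential ingredient is the block-lower-triangular form (i) of $R$, which gives
$$R^T = \begin{bmatrix} R_1^T & C^T \\ 0 & R_2^T \end{bmatrix}, \qquad R^T\begin{bmatrix}n_1\\n_2\end{bmatrix} = \begin{bmatrix}R_1^T n_1+C^T n_2\\ R_2^T n_2\end{bmatrix}.$$
Thus shifting any $\ell\in L$ by $R^T(n_1,n_2)^T\in R^T(\bz^d)$ alters $\pi_2(\ell)$ by $R_2^T n_2$, a completely free element of $R_2^T(\bz^{d-r})$, and independently allows $\pi_1(\ell)$ to be adjusted by any element of $R_1^T(\bz^r)+C^T n_2$. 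Since the Hadamard property depends only on the cosets of $L$ modulo $R^T(\bz^d)$, each element of $L$ can be shifted in this way without affecting $(R,B,L)$ being a Hadamard triple.

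\emph{Construction of $L'$.} Group $L$ according to the equivalence $\ell\sim\ell'\iff \pi_2(\ell)\equiv\pi_2(\ell')\pmod{R_2^T(\bz^{d-r})}$. Inside each class, fix one representative $\ell_0$, write $\pi_2(\ell)-\pi_2(\ell_0)=R_2^T n_2$ for the other members $\ell$, and replace $\ell$ by $\ell-R^T(0,n_2)^T$; the new element lies in the same coset as $\ell$ and has $\pi_2$-value equal to $\pi_2(\ell_0)$. The resulting $L'$ therefore satisfies \eqref{eq1.15.0}, and $(R,B,L')$ is a Hadamard triple because $L'\equiv L\pmod{R^T(\bz^d)}$.

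\emph{Construction of $\cj L'$.} The map $\pi_2$ induces a surjection $\bz^d/R^T(\bz^d)\twoheadrightarrow\bz^{d-r}/R_2^T(\bz^{d-r})$ with fibers of size $|\det R_1|$. For each $[c]\in\bz^{d-r}/R_2^T(\bz^{d-r})$ I fix a target value $c_0\in[c]$: if $L'$ contains elements with $\pi_2\in[c]$, take $c_0$ to be their common $\pi_2$-value; otherwise pick any $c_0\in[c]$. The sub-claim is that every coset of $R^T(\bz^d)$ in the fiber over $[c]$ has a representative with $\pi_2=c_0$. Indeed, given any $\alpha$ in such a coset, $\pi_2(\alpha)-c_0=R_2^T n_2$ for some $n_2\in\bz^{d-r}$, so $\alpha-R^T(0,n_2)^T$ has second coordinate $c_0$. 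Moreover, two elements with $\pi_2=c_0$ are equivalent modulo $R^T(\bz^d)$ iff they differ by an element of $R_1^T(\bz^r)\times\{0\}$, so fixing $\pi_2=c_0$ and letting $\pi_1$ range over a transversal of $R_1^T(\bz^r)$ in $\bz^r$ yields exactly $|\det R_1|$ representatives for the fiber. Filling out each fiber in this way produces $\cj L'\supset L'$ with the $\pi_2$-injectivity property.

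The only non-routine point is the sub-claim above, that one can fix a common $\pi_2$-value $c_0$ inside each $\pi_2$-fiber. This is exactly where the vanishing of the upper-right block of $R$ (equivalently the lower-left block of $R^T$) is used, since it decouples the $\pi_2$-shift from the subsequent freedom in $\pi_1$. Property (iv) of Proposition \ref{pr1.14} is not actually needed here; only the block form (i) enters, and the argument is pure bookkeeping in the quotient $\bz^d/R^T(\bz^d)$.
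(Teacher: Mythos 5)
Your proposal is correct and follows essentially the same route as the paper: both replace a digit $\ell$ by $\ell - R^T(0,n_2)^T$, which (thanks to the block form of $R^T$) changes $\pi_2(\ell)$ by exactly $R_2^T n_2$ while leaving the Hadamard matrix and the cosets mod $R^T(\bz^d)$ unchanged. Your class-by-class normalization of $L'$ and fiber-by-fiber completion of $\cj L'$ merely spell out what the paper compresses into ``repeating this procedure,'' and your observation that only property (i) is used is accurate.
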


\begin{proof}
We note that if $L\equiv L'$ $(\mod R^T({\bz^{d}}))$, then $(R,B,L')$ is a Hadamard triple since $\ip{R^{-1}b}{R^{T}m}$ $\in{\mathbb Z}$ for any $m\in{\mathbb Z}^d$. Now, let $\ell=(\ell_1,\ell_2)^T, l'=(\ell_1',\ell_2')^T$ be in $L$ (or $\cj L$) such that $\ell_2\equiv \ell_2'(\mod R_2^T(\bz^{d-r}))$. We replace $\ell'$ by
$$\ell''=\ell'+R^T(0,(R_2^T)^{-1}(\ell_2-\ell_2'))^T\in\bz^d.$$
 Then $\ell''\equiv \ell' (\mod R^T({\bz^{d}}))$ and the Hadamard property for $L$ is preserved, the new set $\cj L$ is a complete set of representatives $(\mod R^T(\bz^d))$ and $\pi_2(\ell'')=l_2$. Repeating this procedure, we obtain our lemma.
\end{proof}

\medskip

To simplify the notation, in what follows we relabel $L'$ by $L$ and $\cj L'$ by $\cj L$ so that $L$ and $\cj L$ possess property (\ref{eq1.15.0}). We prove some lemmas for the proof of Theorem \ref{th_quasi}.

\begin{lemma}\label{lem1.15}
Suppose that the Hadamard triple $(R,B,L)$ satisfies the properties (i) and (iv)  in Proposition \ref{pr1.14}. Then
\begin{enumerate}
%
%

\item For every $b_1\in \pi_1(B)$ and $b_2\neq b_2'$ in $B_2(b_1)$,
\begin{equation}
\sum_{\ell_2\in \pi_2(L)}\# L_1(\ell_2)e^{2\pi i \ip{R_2^{-1}(b_2-b_2')}{\ell_2}}=0.
\label{eq1.15.1}
\end{equation}
Also, for all $b_1\in \pi_1(B)$,  $\#B_2(b_1)\leq \#\pi_2(L)$  and the elements in $B_2(b_1)$ are not congruent $\mod R_2(\bz^{d-r})$.

\medskip

\item For every $\ell_2\in \pi_2(L)$ and $\ell_1\neq \ell_1'$ in $L_1(\ell_2)$,
\begin{equation}
\sum_{b_1\in\pi_1(B)}\# B_2(b_1)e^{2\pi i \ip{R_1^{-1}b_1}{(\ell_1-\ell_1')}}=0.
\label{eq1.15.2}
\end{equation}
Also, for all $\ell_2\in\pi_2(L)$, $\#L_1(\ell_2)\leq \#\pi_1(B)$ and the elements in $L_1(\ell_2)$ are not congruent $\left(\mod \  R_1^T(\bz^{r})\right)$.

\medskip
\item The set $\pi_2(\cj L)$ is a complete set of representatives $(\mod  \ R_2^T(\bz^{d-r}))$ and, for every $\ell_2\in \pi_2(\cj L)$, the set $\cj L_1(\ell_2)$ is a complete set of representatives $(\mod \  R_1^T(\bz^r))$.
\end{enumerate}
\end{lemma}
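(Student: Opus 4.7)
The plan is to read everything off from unitarity of the Hadamard matrix $H$ of $(R,B,L)$, together with the block-triangular shape of $R$ and the injectivity property \eqref{eq1.15.0} just arranged for $L$ and $\cj L$. The key preliminary observation is that, since
$$R^{-1}=\begin{bmatrix} R_1^{-1} & 0 \\ -R_2^{-1}CR_1^{-1} & R_2^{-1}\end{bmatrix},$$
the pairing $\ip{R^{-1}(b-b')}{\ell}$ collapses to $\ip{R_2^{-1}(b_2-b_2')}{\ell_2}$ whenever $b_1=b_1'$, and dually $\ip{R^{-1}b}{\ell-\ell'}$ collapses to $\ip{R_1^{-1}b_1}{\ell_1-\ell_1'}$ whenever $\ell_2=\ell_2'$.

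For part (i), I plug the first simplification into the row-orthogonality relation coming from $H^*H=I$ applied to $(b_1,b_2)$ and $(b_1,b_2')$; the sum $\sum_{\ell\in L}$ regroups by $\ell_2=\pi_2(\ell)$, which by \eqref{eq1.15.0} is a genuinely disjoint labelling of $L$ by the distinct elements of $\pi_2(L)$, each fibre having size $\#L_1(\ell_2)$. This yields \eqref{eq1.15.1} directly. Reading \eqref{eq1.15.1} as orthogonality of the $\#B_2(b_1)$ nonzero vectors $(\sqrt{\#L_1(\ell_2)}\,e^{2\pi i\ip{R_2^{-1}b_2}{\ell_2}})_{\ell_2\in\pi_2(L)}$ in $\bc^{\#\pi_2(L)}$ gives the bound $\#B_2(b_1)\leq\#\pi_2(L)$. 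Non-congruence of $B_2(b_1)$ mod $R_2(\bz^{d-r})$ is a one-line contradiction: if $b_2-b_2'\in R_2(\bz^{d-r})$ then every exponential in \eqref{eq1.15.1} equals $1$ and the left side collapses to $\sum_{\ell_2}\#L_1(\ell_2)=\#L=N\neq 0$.

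Part (ii) is the mirror image using $HH^*=I$ on two columns sharing $\ell_2$; the regrouping is over $\pi_1(B)\subset\bz^r$ and needs no analogue of \eqref{eq1.15.0}, since $\pi_1$ is applied inside $\bz^r$ rather than modulo a lattice. The bound $\#L_1(\ell_2)\leq\#\pi_1(B)$ and the mod $R_1^T(\bz^r)$ non-congruence then follow by the same orthogonality/contradiction step. For part (iii), I use a lattice-quotient count: the block-triangular form gives $\pi_2(R^T(\bz^d))=R_2^T(\bz^{d-r})$, so $\pi_2$ descends to a surjection $\bz^d/R^T(\bz^d)\twoheadrightarrow\bz^{d-r}/R_2^T(\bz^{d-r})$ with fibres of size $|\det R_1|$. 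Because $\cj L$ meets every coset on the left exactly once, \eqref{eq1.15.0} forces $\pi_2(\cj L)$ to pick exactly one integer vector from each coset on the right, giving a complete set of representatives of size $|\det R_2|$. Each fibre $\cj L_1(\ell_2)$ therefore has $|\det R_1|$ elements, and two of them congruent mod $R_1^T(\bz^r)$ would produce a difference of the form $R^T(k,0)^T\in R^T(\bz^d)$ between two members of $\cj L$, forcing equality; so $\cj L_1(\ell_2)$ is a complete set of representatives mod $R_1^T(\bz^r)$.

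The main conceptual hurdle is bookkeeping: recognising that \eqref{eq1.15.0} is exactly what is needed to turn $\sum_{\ell\in L}(\cdots)$ into $\sum_{\ell_2\in\pi_2(L)}\#L_1(\ell_2)(\cdots)$ and to pass cleanly between $R^T(\bz^d)$-cosets and $R_2^T(\bz^{d-r})$-cosets on $\cj L$. Once that translation is in hand, each of (i), (ii), (iii) reduces to a short direct computation with the Hadamard condition.
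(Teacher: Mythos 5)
Your proof is correct and takes essentially the same route as the paper: column/row orthogonality of the Hadamard matrix, collapsed via the block-triangular form of $R^{-1}$, gives the fibre-weighted sums \eqref{eq1.15.1}--\eqref{eq1.15.2}, the cardinality bounds come from orthogonality of the weighted exponential vectors, and part (iii) is the same counting based on \eqref{eq1.15.0} and the complete-representative property of $\cj L$ (the paper runs the equality chain $|\det R|=|\det R_1||\det R_2|\geq\sum_{\ell_2\in\pi_2(\cj L)}\#\cj L_1(\ell_2)=\#\cj L=|\det R|$ instead of your quotient-surjection fibre count, but the content is identical). One minor remark: the regrouping $\sum_{\ell\in L}=\sum_{\ell_2\in\pi_2(L)}\sum_{\ell_1\in L_1(\ell_2)}$ in (i) is automatic from the definition of the fibres $L_1(\ell_2)$ and does not actually require \eqref{eq1.15.0}; that property is only needed where congruence mod $R_2^T(\bz^{d-r})$ must force equality, as in part (iii).
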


\begin{proof}
We first prove (i).
Take $b_1\in\pi_1(B)$ and $b_2\neq b_2'$ in $B_2(b_1)$, from the mutual orthogonality, we have
$$
0=\sum_{\ell_2\in\pi_2(L)}\sum_{\ell_1\in L_1(\ell_2)}e^{2\pi i \ip{(R^T)^{-1}(0,b_2-b_2')^T}{ (\ell_1,\ell_2)^T}}=\sum_{\ell_2\in\pi_2(L)}\sum_{\ell_1\in L_1(\ell_2)}e^{2\pi i \ip{(R_2^T)^{-1}(b_2-b_2')}{ \ell_2}}
$$
$$
=\sum_{\ell_2\in\pi_2(L)}\#L_1(\ell_2)e^{2\pi i \ip{(R_2^T)^{-1}(b_2-b_2')}{\ell_2}}.
$$
This shows \eqref{eq1.15.1} and that the rows of the matrix
$$\left(\sqrt{\# L_1(\ell_2)}e^{2\pi i \ip{(R_2^T)^{-1}b_2}{\ell_2}}\right)_{\ell_2\in \pi_2(L),b_2\in B_2(b_1)}$$
are orthogonal. Therefore $\#B_2(b_1)\leq \#\pi_2(L)$ for all $b_1\in \pi_1(B)$. Equation \eqref{eq1.15.1} implies that the elements in $B_2(b_1)$ cannot be congruent $\mod R_2^T(\bz^{d-r})$

\medskip

(ii) follows from an analogous computation.

\medskip

For (iii), the elements in $\pi_2(\cj L)$ are not congruent $(\mod R_2^T(\bz^{d-r}))$, by the construction in Lemma \ref{lem1.15-}. If $\ell_2\in \pi_2(\cj L)$ and $\ell_1,\ell_1'\in \cj L_1(l_2)$ are congruent $(\mod R_1^T(\bz^r))$ then $(\ell_1,\ell_2)^T\equiv (\ell_1',\ell_2)^T$$(\mod \ R^T(\bz^d))$. Thus, $\ell_1=\ell_1'$, as $\cj L$ is a complete set of representatives of $R^T({\mathbb Z}^d)$. From these, we have $\#\pi_2(L)\leq |\det R_2|$ and, for all $\ell_2\in \pi_2(\cj L)$, $\#L_1(\ell_2)\leq |\det R_1|$. Since
$$|\det R|=|\det R_1||\det R_2|\geq \sum_{\ell_2\in \pi_2(\cj L)}\#\cj L_1(\ell_2)=\#\cj L=|\det R|,$$
we must have equalities in all inequalities and we get that the sets are indeed {\it complete} sets of representatives.
\end{proof}

\medskip

\begin{lemma}\label{lem1.16}
Let $1\leq j\leq m$. If the  the transition from $(x,(R_2^T)^jy_0)^T$ is possible with the digit $\ell\in \cj L$, then $\pi_2(\ell)=0$.
\end{lemma}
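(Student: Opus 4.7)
My plan is to use the block-triangular structure of $R$ together with the chain property of the transitions recorded in Proposition \ref{pr1.14}(iv) to force a congruence on $\pi_2(\ell)$, and then to upgrade this congruence to an equality using property (\ref{eq1.15.0}) from Lemma \ref{lem1.15-}.

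First I would unpack the block structure. Since $R=\begin{bmatrix} R_1&0\\ C&R_2\end{bmatrix}$, the matrix $R^T$ is block upper-triangular with $R_2^T$ in the lower-right, and hence $(R^T)^{-1}$ is again block upper-triangular with $R_2^{-T}$ in the lower-right block. Consequently, for any $w\in\br^d$ one has $\pi_2((R^T)^{-1}w)=R_2^{-T}\pi_2(w)$. Applying this with $w=(x,(R_2^T)^jy_0)^T+\ell$, the second coordinate of the image $\tau_\ell((x,(R_2^T)^jy_0)^T)=(R^T)^{-1}(w)$ becomes
$$R_2^{-T}\left((R_2^T)^jy_0+\pi_2(\ell)\right)=(R_2^T)^{j-1}y_0+R_2^{-T}\pi_2(\ell).$$

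Next, the hypothesis that the transition is possible, combined with the last sentence of Proposition \ref{pr1.14}(iv), forces this image to lie in $\br^r\times\{(R_2^T)^{j-1}y_0\}+\bz^d=\br^r\times\left((R_2^T)^{j-1}y_0+\bz^{d-r}\right)$. Comparing second coordinates yields $R_2^{-T}\pi_2(\ell)\in\bz^{d-r}$, equivalently $\pi_2(\ell)\equiv 0\pmod{R_2^T(\bz^{d-r})}$.

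To finish, I would normalize so that $0\in L$. This is permissible, since translating $L$ by a fixed integer vector corresponds to right-multiplying the Hadamard matrix $H$ in (\ref{Hadamard triples}) by a diagonal unitary, and hence preserves the Hadamard condition; under this normalization we may choose $\cj L$ to contain $0$ as well. Then property (\ref{eq1.15.0}) of Lemma \ref{lem1.15-} applied to $\ell$ and $\ell'=0$ promotes $\pi_2(\ell)\equiv \pi_2(0)=0\pmod{R_2^T(\bz^{d-r})}$ to the equality $\pi_2(\ell)=0$. The only potential sticking point is the normalization step; the rest is a direct read-off from the block structure and the transition rule.
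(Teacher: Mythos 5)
Your argument is correct and is essentially the paper's own proof: the block upper-triangular form of $(R^T)^{-1}$ combined with the transition property in Proposition \ref{pr1.14}(iv) yields $\pi_2(\ell)\equiv 0 \ (\mod R_2^T(\bz^{d-r}))$, and Lemma \ref{lem1.15-} is then invoked to upgrade the congruence to $\pi_2(\ell)=0$, exactly as you do. The one point to tighten is the normalization you flag: if you translate $L$ (and $\cj L$) so that $0\in\cj L$, you should note this is harmless because, besides the Hadamard property, it also preserves property \eqref{eq1.15.0} and the transition structure of Proposition \ref{pr1.14}(iv) (since $u_B$ and the sets $\br^r\times \{(R_2^T)^ky_0\}+\bz^d$ are $\bz^d$-periodic, possibility of a transition depends on the digit only through its class $\mod R^T(\bz^d)$); alternatively, you can avoid translating altogether by observing that the construction in Lemma \ref{lem1.15-} can be arranged so that every $\ell\in\cj L$ with $\pi_2(\ell)\equiv 0 \ (\mod R_2^T(\bz^{d-r}))$ actually has $\pi_2(\ell)=0$ (the correcting vector $R^T(0,-(R_2^T)^{-1}\pi_2(\ell))^T$ is integral precisely for this class), which is the normalization the paper implicitly uses.
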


\begin{proof}
If the transition is possible with digit $\ell=(\ell_1,\ell_2)^T$, then, by Proposition \ref{pr1.14},  $$(R^T)^{-1}((x,(R_2^T)^jy_0)^T+(\ell_1,\ell_2)^T)\equiv (y,(R_2^T)^{j-1}y_0)^T(\mod\bz^d),$$ for some $y\in\br^r$, and therefore $(R_2^T)^{-1}\ell_2\equiv 0 \ (\mod \ \bz^{d-r})$, so $\ell_2\equiv 0 \ (\mod R_2^T(\bz^{d-r}))$. By Lemma \ref{lem1.15-}, $\ell_2=0$.
\end{proof}

\medskip

\begin{lemma}
Let $y_j:=(R_2^T)^jy_0$, $1\leq j\leq m$. Then, for all $x\in\br^r$ and all $\ell=(\ell_1,\ell_2)\in \cj L$ with $\pi_2(\ell)=\ell_2\neq 0$, we have that
\begin{equation}
\sum_{b_2\in B_2(b_1)}e^{2\pi i \ip{b_2}{ (R_2^T)^{-1}(y_j+ \ell_2)}}=0.
\label{eq1.17.1}
\end{equation}
\end{lemma}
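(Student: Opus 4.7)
The key observation is that for any $j\in\{1,\dots,m\}$ and any $x\in\br^r$, the point $(x,y_j)^T$ lies in $\tilde{\mathcal S}\subset\tilde{\mathcal Z}$ (note $(R_2^T)^m y_0\equiv y_0\pmod{\bz^{d-r}}$, so the case $j=m$ falls into the $k=0$ component of $\tilde{\mathcal S}$). The plan is to combine the preceding lemma, which rules out transitions with $\pi_2(\ell)\neq 0$, with the block-triangular form of $R$ and the arbitrariness of $x$, and then isolate the inner sum over $b_2$ by linear independence of characters.

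First I would fix $\ell=(\ell_1,\ell_2)^T\in\cj L$ with $\ell_2\neq 0$, and an arbitrary $x\in\br^r$. By Lemma~\ref{lem1.16}, the transition from $(x,y_j)^T$ with digit $\ell$ is \emph{not} possible, which by the definition of ``possible transition'' means
\begin{equation*}
u_B\bigl((R^T)^{-1}((x,y_j)^T+\ell)\bigr)=0.
\end{equation*}
Next I would exploit the block structure $R=\begin{bmatrix}R_1 & 0\\ C & R_2\end{bmatrix}$, which yields
\begin{equation*}
(R^T)^{-1}=\begin{bmatrix}(R_1^T)^{-1} & -(R_1^T)^{-1}C^T(R_2^T)^{-1}\\ 0 & (R_2^T)^{-1}\end{bmatrix}.
\end{equation*}
Setting $\eta:=(R^T)^{-1}((x,y_j)^T+\ell)=(\eta_1,\eta_2)^T$, one checks that $\eta_2=(R_2^T)^{-1}(y_j+\ell_2)$ is independent of $x$, while $\eta_1=(R_1^T)^{-1}(x+\ell_1)-(R_1^T)^{-1}C^T(R_2^T)^{-1}(y_j+\ell_2)$ sweeps out all of $\br^r$ as $x$ ranges over $\br^r$.

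Third, I would partition $B$ by its first coordinate and rewrite the vanishing of $u_B(\eta)$ as
\begin{equation*}
0=\sum_{b\in B}e^{2\pi i\ip{b}{\eta}}=\sum_{b_1\in\pi_1(B)}e^{2\pi i\ip{b_1}{\eta_1}}\underbrace{\left(\sum_{b_2\in B_2(b_1)}e^{2\pi i\ip{b_2}{\eta_2}}\right)}_{=:S(b_1)}.
\end{equation*}
Here the coefficients $S(b_1)$ depend only on $\eta_2$ (hence on $j$ and $\ell_2$) but not on $x$, while the outer exponentials $e^{2\pi i\ip{b_1}{\eta_1}}$ run over pairwise distinct characters of $\br^r$ as $b_1$ varies over the (by definition distinct) elements of $\pi_1(B)$. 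Since $\eta_1$ ranges over all of $\br^r$ as $x$ does, the identity $\sum_{b_1\in\pi_1(B)} S(b_1)e^{2\pi i\ip{b_1}{\eta_1}}=0$ holds for every $\eta_1\in\br^r$, and linear independence of distinct exponentials forces $S(b_1)=0$ for each $b_1\in\pi_1(B)$, which is precisely \eqref{eq1.17.1}.

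The only obstacle is the bookkeeping inside the block calculation---namely, verifying that $\eta_2$ is independent of $x$ while $\eta_1$ traces out $\br^r$; once that is in place, Lemma~\ref{lem1.16} and linear independence of characters do the rest automatically.
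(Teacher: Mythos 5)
Your proposal is correct and follows essentially the same route as the paper's own proof: invoke Lemma \ref{lem1.16} to conclude $u_B((R^T)^{-1}((x,y_j)^T+\ell))=0$ when $\ell_2\neq 0$, use the block-triangular form of $(R^T)^{-1}$ to see that the second component $(R_2^T)^{-1}(y_j+\ell_2)$ is independent of $x$ while the first sweeps $\br^r$, and then kill each coefficient $S(b_1)$ by linear independence of distinct exponentials. The extra remark about $(x,y_j)^T$ lying in $\tilde{\mathcal S}$ is harmless but not needed, since Lemma \ref{lem1.16} already covers all $1\leq j\leq m$.
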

\begin{proof}
We have that $(R^T)^{-1}$ is of the form $$(R^T)^{-1}=\begin{bmatrix}
(R_1^T)^{-1}& D\\ 0& (R_2^T)^{-1}
\end{bmatrix}.
$$
Then, for all $x\in\br^r$ and all $\ell=(\ell_1,\ell_2)\in \cj L$ with $\pi_2(\ell)=\ell_2\neq 0$, we have that $u_B((R^T)^{-1}((x,y_j)^T+(\ell_1,\ell_2)^T))=0$, because such transitions are not possible by Lemma \ref{lem1.16}. Then
$$
\sum_{b_1\in \pi_1(B)}\sum_{b_2\in B_2(b_1)}e^{2\pi i (\ip{b_1}{ R_1^{-1}(x+l_1)+D(y_1+l_2)}+\ip{b_2}{ (R_2^T)^{-1}(y_j+l_2)})}=0.
$$
Since $x$ is arbitrary,  it follows that
$$
\sum_{b_1\in\pi_1(B)}e^{2\pi i \ip{b_1}{x}}\sum_{b_2\in B_2(b_1)}e^{2\pi i \ip{b_2}{(R_2^T)^{-1}(y_j+l_2)}}=0 \ \mbox{for all}  \ x\in{\mathbb R}^d.
$$
Therefore, by linear independence of exponential functions, we obtain \eqref{eq1.17.1}.
\end{proof}

\medskip

\begin{lemma}\label{lem1.18}
For every $b_1\in\pi_1(B)$, the set $B_2(b_1)$ is a complete set of representatives $(\mod R_2(\bz^{d-r}))$. Therefore $\#B_2(b_1)=|\det R_2|=\#\pi_2(L)$ and
$(R_2,B_2(b_1),\pi_2(L))$ is a Hadamard triple. Also, for every $\ell_2\in \pi_2(L)$, $\#L_1(\ell_2)=\#\pi_1(B)=\frac{N}{|\det R_2|}=:N_1$ and
$(R_1,\pi_1(B),L_1(\ell_2))$ is a Hadamard triple. 
\end{lemma}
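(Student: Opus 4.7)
The plan is to combine equation \eqref{eq1.17.1} with a discrete inverse Fourier transform on the finite group $\Omega := (R_2^T)^{-1}\bz^{d-r}/\bz^{d-r}$, which has $|\det R_2|$ elements. Fix $b_1\in\pi_1(B)$ and set
\[
g_{b_1}(\xi) := \sum_{b_2\in B_2(b_1)} e^{2\pi i\langle b_2,\xi\rangle},
\]
a $\bz^{d-r}$-periodic trigonometric polynomial. Using $y_{j-1}=(R_2^T)^{-1}y_j$ together with Lemma~\ref{lem1.15}(iii), which identifies $\pi_2(\cj L)$ with a complete set of residues modulo $R_2^T\bz^{d-r}$ (with the zero class normalized to the value $0$ by Lemma~\ref{lem1.15-}), equation \eqref{eq1.17.1} rewrites as $g_{b_1}(y_{j-1}+\omega)=0$ for every nonzero $\omega\in\Omega$ and every $1\le j\le m$.

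The heart of the argument is a character sum. For any $b_2^\ast\in\bz^{d-r}$, set
\[
S_{b_2^\ast} := \sum_{\omega\in\Omega} e^{-2\pi i\langle b_2^\ast,\omega\rangle}\, g_{b_1}(y_{j-1}+\omega).
\]
Since $g_{b_1}$ vanishes at every nonzero $\omega$, only the $\omega=0$ term survives, so $S_{b_2^\ast}=g_{b_1}(y_{j-1})$ independently of $b_2^\ast$. On the other hand, expanding $g_{b_1}$ and using the character orthogonality $\sum_{\omega\in\Omega} e^{2\pi i\langle\beta,\omega\rangle}=|\det R_2|$ when $\beta\in R_2\bz^{d-r}$ and $0$ otherwise (for $\beta\in\bz^{d-r}$) yields
\[
S_{b_2^\ast} = |\det R_2|\sum_{\substack{b_2\in B_2(b_1)\\ b_2-b_2^\ast\in R_2\bz^{d-r}}} e^{2\pi i\langle b_2,y_{j-1}\rangle},
\]
and by Lemma~\ref{lem1.15}(i) the inner sum contains at most one term.

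If $B_2(b_1)$ failed to cover every residue class in $\bz^{d-r}/R_2\bz^{d-r}$, then choosing $b_2^\ast$ from an unoccupied class would give $S_{b_2^\ast}=0$ and hence $g_{b_1}(y_{j-1})=0$, while choosing $b_2^\ast$ from an occupied class (one exists because $b_1\in\pi_1(B)$ forces $B_2(b_1)\neq\emptyset$) would give $|S_{b_2^\ast}|=|\det R_2|>0$, contradicting $g_{b_1}(y_{j-1})=0$. Hence $B_2(b_1)$ must be a complete set of representatives for $R_2\bz^{d-r}$, and $\#B_2(b_1)=|\det R_2|$ for every $b_1\in\pi_1(B)$.

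The remaining assertions follow by counting. From $N=\sum_{b_1}\#B_2(b_1)$ I obtain $\#\pi_1(B)=N/|\det R_2|=N_1$. Combining the bound $\#B_2(b_1)\le\#\pi_2(L)$ from Lemma~\ref{lem1.15}(i) with $\pi_2(L)\subseteq\pi_2(\cj L)$ and $\#\pi_2(\cj L)=|\det R_2|$ forces $\pi_2(L)=\pi_2(\cj L)$; summing the bound $\#L_1(\ell_2)\le N_1$ from Lemma~\ref{lem1.15}(ii) over $\pi_2(L)$ and comparing with $N$ then pins down $\#L_1(\ell_2)=N_1$ for every $\ell_2\in\pi_2(L)$. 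The Hadamard property for $(R_2,B_2(b_1),\pi_2(L))$ is then just the unitarity of the standard character matrix between the complete residue systems $\bz^{d-r}/R_2\bz^{d-r}$ and $\bz^{d-r}/R_2^T\bz^{d-r}$; for $(R_1,\pi_1(B),L_1(\ell_2))$, once $\#B_2(b_1)$ is the constant $|\det R_2|$, the relation in Lemma~\ref{lem1.15}(ii) reduces to $\sum_{b_1\in\pi_1(B)}e^{2\pi i\langle R_1^{-1}b_1,\ell_1-\ell_1'\rangle}=0$ for distinct $\ell_1,\ell_1'\in L_1(\ell_2)$, which is the required orthogonality for a square matrix. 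The main conceptual hurdle is to notice that the sparse vanishing furnished by \eqref{eq1.17.1} is exactly sharp enough to be inverted by the DFT on $\Omega$; after that step, the remaining assertions are pure bookkeeping.
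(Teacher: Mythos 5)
Your proposal is correct and is essentially the paper's own argument in different packaging: your character sum $S_{b_2^\ast}$ over $\Omega=(R_2^T)^{-1}\bz^{d-r}/\bz^{d-r}$ is exactly the inverse Fourier transform, on the finite group $\bz^{d-r}/R_2(\bz^{d-r})$, of the function $f(b_2)=e^{-2\pi i\ip{b_2}{(R_2^T)^{-1}y_j}}\chi_{B_2(b_1)}(b_2)$, whose transform the paper shows (via \eqref{eq1.17.1}) is supported at the zero character, forcing $f$ constant and hence $B_2(b_1)$ a full residue system. The subsequent counting, the equality $\#\pi_2(L)=|\det R_2|$, $\#L_1(\ell_2)=\#\pi_1(B)=N_1$, and the two Hadamard conclusions are carried out just as in the paper.
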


\begin{proof}
Let $b_1\in\pi_1(B)$.  We know from Lemma \ref{lem1.15}(i) that the elements of $B_2(b_1)$ are not congruent $(\mod \ R_2(\bz^{d-r}))$. We can identify $B_2(b_1)$ with a subset of the group $\bz^{d-r}/R_2(\bz^{d-r})$. The dual group is $\bz^{d-r}/R_2^T(\bz^{d-r})$ which we can identify with $\pi_2(\cj L)$.
For a function $f$ on $\bz^{d-r}/R_2(\bz^{d-r})$, the Fourier transform is
$$\hat f(\ell_2)=\frac{1}{\sqrt{|\det R_2|}}\sum_{b_2\in \bz^{d-r}/R_2(\bz^{d-r})} f(b_2)e^{-2\pi i\ip{b_2}{(R_2^T)^{-1}\ell_2}},\quad (\ell_2\in \pi_2(L_2)).$$

Let $1\leq j\leq m$. Consider the function
\begin{equation}\label{f(b)}
f(b_2)=\left\{\begin{array}{cc}e^{-2\pi i\ip{b_2}{(R_2^T)^{-1}y_j}},&\mbox{ if }b_2\in B_2(b_1)\\
0,&\mbox{ if }b_2\in (\bz^{d-r}/R_2(\bz^{d-r}))\setminus B_2(b_1).\end{array}\right.
\end{equation}
Then equation \eqref{eq1.17.1} shows that $\hat f(\ell_2)=0$ for $\ell_2\in \pi_2(\cj L)$, $\ell_2\neq 0$. Thus $\hat f=c\cdot\chi_{0}$ for some constant $c$ and by $\hat{f}(0) = c$,
\begin{equation}\label{c}
c=\frac{1}{\sqrt{|\det R_2|}}\sum_{b_2\in B_2(b_1)}e^{-2\pi i \ip{b_2}{(R_2^T)^{-1}(y_j)}}.
\end{equation}

\medskip

 Now we apply the inverse Fourier transform and we get
$$f(b_2)=\frac{1}{\sqrt{|\det R_2|}}\sum_{\ell_2\in \pi_2(\cj L)}c\chi_{0}(\ell_2)e^{2\pi i \ip{b_2}{ (R_2^T)^{-1}\ell_2}}=\frac{c}{\sqrt{|\det R_2|}}.$$
So $f(b_2)$ is constant and therefore $B_2(b_1)=\bz^{d-r}/R_2(\bz^{d-r})$, which means that $B_2(b_1)$ is a complete set of representatives and $\#B_2(b_1)=|\det R_2|$. Since the elements in $\#\pi_2(L)$ are not congruent $(\mod R_2^T\bz^{d-r})$, we get that $\# \pi_2(L)\leq |\det R_2|$, and with Lemma \ref{lem1.15} (i), it follows that $\#\pi_2(L)=\#B_2(b_1)=|\det R_2|$. In particular, $\pi_2(L)$ is a complete set of representatives $(\mod R_2^T(\bz^{d-r}))$, so $(R_2,B_2(b_1),\pi_1(L))$ form a Hadamard triple.

\medskip

Since $\sum_{b_1\in \pi_1(B)}\#B_2(b_1)=N$, we get that $\#\pi_1(B)=N/|\det R_2|$. With Lemma \ref{lem1.15}(ii), we have
$$N=\sum_{\ell_2\in\pi_2(L)}\#L_1(\ell_2)\leq \#\pi_2(L)\pi_1(B)=N.$$
Therefore we have equality in all inequalities so $\#L_1(\ell_2)=\#\pi_1(B)=N/|\det R_2|$. Then \eqref{eq1.15.2} shows that $(R_1,\pi_1(B),L_1(\ell_2))$ is a Hadamard triple for all $\ell_2\in \pi_2(L)$.
\end{proof}

\medskip

\begin{proof}[Proof of Theorem \ref{th_quasi}] By Lemma \ref{lem1.18}, we know that $B$ must have the form
 $$
 \bigcup_{b_1\in\pi_1(B)} \{b_1\}\times B_2(b_1)
 $$
 where $\#\pi_1(B) = N_1$ and $B_2(b_1)$ is a set of complete representative (mod $R_2^T({\mathbb Z}^{d-r})$). By enumerating elements $\pi_1(B)  = \{u_1,...,u_{N_1}\}$ and $B_2(u_{i}) = \{d_{i,1},...,d_{i,|\det R_2|}\}$, we can write
 $$
 B =\left\{(u_i,d_{i,j})^T : 1\leq i\leq N_1, 1\leq j\leq|\det R_2|\right\}.
 $$
It suffices to show $d_{i,j}$ are given by $v_i+Qc_{i,j}$ where $Q$ has the properties (ii) and (iii) in the theorem. From the equation (\ref{f(b)}) and the fact that $f$ is a constant,
we have, for $b_2\in B_2(b_1)$ and $b_1\in\pi_1(B)$,
$$
e^{-2\pi i \ip{b_2}{(R_2^T)^{-1}y_j}}=f(b_2)=\frac{c}{\sqrt{|\det R_2|}},
$$
which implies (from (\ref{c})) that
$$
\frac{1}{|\det R_2|}\sum_{b_2'\in B_2(b_1)}e^{2\pi i \ip{(b_2-b_2')}{(R_2^T)^{-1}y_j}}=1.
$$
By applying the triangle inequality to the sum above, we see that we must have
$$
e^{2\pi i\ip{b_2-b_2'}{(R_2^T)^{-1}y_j}}=1,
$$
which means
\begin{equation}\label{b_2-b_2'}
\ip{b_2-b_2'}{(R_2^T)^{-1} y_j}\in\bz \  \mbox{for all} \ b_2,b_2'\in B_2(b_1), b_1\in \pi_1(B), 1\leq j\leq m.
\end{equation} Here we recall that  $y_j = (R_2^T)^{j}y_0$.

\medskip

Define now the lattice
$$\Gamma:=\{x\in\bz^{d-r} : \ip{x}{(R_2^T)^{-1}y_j}\in{\mathbb Z},  \ \forall \ 1\leq j\leq m\}.$$

\medskip

We first claim that the lattice $\Gamma$ is of full-rank. Indeed, since $(R_2^T)^my_j\equiv y_j(\mod \bz^{d-r})$, it follows that all the points $(R_2^T)^{-1}(y_j)$ have only rational components. Let $\tilde m$ be a common multiple for all the denominators of all the components of the vectors $(R_2^T)^{-1}(y_j)$, $1\leq j\leq m$. If $\{e_i\}$ are the canonical vectors in $\br^{d-r}$, then $\ip{\tilde me_i}{ (R_2^T)^{-1}(y_j)}\in\bz$ so $\tilde m e_i\in\Gamma$, and thus $\Gamma$ is full-rank.

\medskip

Next we prove that $\Gamma$ is a proper sublattice of $\bz^{d-r}$. The vectors $y_j$ are not in $\bz^{d-r}$ because $\widehat\mu((0,y_j)^T+k)=0$ for all $k\in\bz^d$, and that would contradict the fact that $\widehat\mu(0)=1$. This implies that the vectors $(R_2^T)^{-1}(y_j)$ are not in $\bz^{d-r}$ so $\Gamma$ is a proper sublattice of $\bz^{d-r}$.

\medskip

Since $\Gamma$ is a full-rank lattice in $\bz^{d-r}$, there exists an invertible matrix with integer entries $Q$ such that $\Gamma=Q\bz^{d-r}$, and since $\Gamma$ is a proper sublattice, it follows that $|\det Q|>1$ so $|\det Q|\geq 2$.
In addition, we know from (\ref{b_2-b_2'}) that, for all $u_i\in \pi_1(B)$ and $d_{i,j},d_{i,j'}\in B_2(u_i)$, $d_{i,j}-d_{i,j'}\in\Gamma$. Therefore, if we fix an element $v_i=d_{i,j_0}\in B_2(a_i)$, then all the elements in $B_2(a_i)$ are of the form $d_{i,j}=v_i+Qc_{i,j}$ for some $c_{i,j}\in\bz^{d-r}$. The fact that $B_2(a_i)$ is a complete set of representatives $(\mod R_2\bz^{d-r})$ (Lemma \ref{lem1.18}) implies that  the set of the corresponding elements $Qc_{i,j}$ is also a complete set of representatives $(\mod R_2\bz^{d-r})$. This shows (iii).

\medskip

It remains to show $R_2Q = Q\widetilde{R_2}$ for some for some $(d-r)\times(d-r)$ integer matrix $\widetilde{R_2}$.  Indeed, if $x\in\Gamma$, and $0\leq j\leq m-1$, then
$\ip{R_2x}{(R_2^T)^jy_0}=\ip{x}{(R_2^T)^{j+1}y_0}\in\bz$, since $(R_2^T)^my_0\equiv y_0 \ (\mod\bz^{d-r})$. So $R_2x\in\Gamma$. Then, for the canonical vectors $e_i$, there exist $\tilde e_i\in\bz^{d-r}$ such that $R_2Qe_i=Q\widetilde{e_i}$. Let $\widetilde{R_2}$ be the matrix with columns $\widetilde{e_i}$. Then $R_2Q=Q\widetilde{R_2}$.

\medskip

Finally, by choosing $L$ with the property Lemma \ref{lem1.15-}, the Hadamard triple properties of both  $(R_1,\pi_1(B), L_1(\ell_2))$ and $(R_2,B_2(b_1),\pi_1(L))$ on ${\mathbb R}^r$ and ${\mathbb R}^{d-r}$ respectively are direct consequences of Lemma \ref{lem1.18}.
\end{proof}
\medskip

\section{Proof of the theorem}
In the last section, we will prove our main theorem. We first need to study the spectral property of the quasi-product form.
Suppose now the pair $(R,B)$ is in the quasi-product form
\begin{equation}\label{R_4.1}R=\begin{bmatrix}
R_1&0\\
C&R_2
\end{bmatrix}\end{equation}
\begin{equation}
B=\left\{(u_i,d_{i,j})^T : 1\leq i\leq N_1, 1\leq j\leq N_2:=|\det R_2|\right\},
\label{eq1.23.1}
\end{equation}
and $\{d_{i,j}:1\leq j\leq N_2\}$ ($d_{i,j} = v_i +Qc_{i,j}$ as in Theorem \ref{th_quasi}) is a complete set of representatives $(\mod R_2\bz^{d-r})$.
We will show that the measure $\mu=\mu(R,B)$ has a quasi-product structure.

\medskip

Note that we have
$$R^{-1}=\begin{bmatrix}
R_1^{-1}&0\\
-R_2^{-1}CR_1^{-1}&R_2^{-1}
\end{bmatrix}$$
and, by induction,
$$R^{-k}= \begin{bmatrix}
R_1^{-k}&0\\
D_k&R_2^{-k}
\end{bmatrix},\mbox{ where }D_k:=-\sum_{l=0}^{k-1}R_2^{-(l+1)}CR_1^{-(k-l)}.$$
 For the invariant set $T(R,B)$, we can express it as a set of infinite sums,
 $$
 T(R,B)=\left\{\sum_{k=1}^\infty R^{-k}b_k : b_k\in B\right\}.
 $$
Therefore any element $(x,y)^T\in T(R,B)$ can be written in the following form
$$x=\sum_{k=1}^\infty R_1^{-k}a_{i_k}, \quad y=\sum_{k=1}^\infty D_ka_{i_k}+\sum_{k=1}^\infty R_2^{-k}d_{i_k,j_k}.$$
Let $X_1$ be the attractor (in $\br^r)$ associated to the IFS defined by the pair $(R_1,\pi_1(B)=\{u_i :1\leq i\leq N_1\})$ (i.e. $X_1=T(R_1,\pi_1(B))$). Let $\mu_1$ be the (equal-weight) invariant measure associated to this pair.

\medskip

For each sequence $\omega=(i_1i_2\dots)\in\{1,\dots,N_1\}^{\bn} = \{1,\dots, N_1\}\times\{1,\dots, N_1\}\times...$, define
\begin{equation}\label{eqxomega}
x(\omega)=\sum_{k=1}^\infty R_1^{-k}u_{i_k}.
\end{equation}
 As $(R_1,\pi_1(B))$ forms Hadamard triple with some $L_1(\ell_2)$ by Lemma \ref{lem1.18}, the measure $\mu(R_1,\pi_1(B))$ has the no-overlap property (Theorem \ref{thDL15}). It implies that for $\mu_1$-a.e. $x\in X_1$, there is a unique $\omega$ such that $x(\omega)=x$. We define this as $\omega(x)$. This establishes a bijective correspondence, up to measure zero, between the set $\Omega_1:=\{1,\dots,N_1\}^{\bn}$ and $X_1$. The measure $\mu_1$ on $X_1$ is the pull-back of the product measure which assigns equal probabilities $\frac1{N_1}$ to each digit.

\medskip

For $\omega=(i_1i_2\dots)$ in $\Omega_1$, define
$$\Omega_2(\omega):=\{(d_{i_1,j_1}d_{i_2,j_2}\dots d_{i_n,j_n}\dots) : j_k\in \{1,\dots,N_2\}\mbox{ for all }k\in\bn\}.$$
For $\omega\in\Omega_1$, define $g(\omega):=\sum_{k=1}^\infty D_ka_{i_k}$ and $g(x):=g(\omega(x))$, for $x\in X_1$.  Also $\Omega_2(x):=\Omega_2(\omega(x))$.
For $x\in X_1$, define
$$X_2(x):=X_2(\omega(x)):=\left\{\sum_{k=1}^\infty R_2^{-k}d_{i_k,j_k}: j_k\in\{1,\dots,N_2\}\mbox{ for all }k\in\bn\right\}.$$
Note that the attractor $T(R,B)$ has the following form

$$T(R,B)=\{(x,g(x)+y)^T: x\in X_1,y\in X_2(x)\}.$$

\medskip

For $\omega\in \Omega_1$,  consider the product probability measure $\mu_\omega$, on $\Omega_2(\omega)$, which assigns equal probabilities $\frac{1}{N_2}$ to each digit $d_{i_k,j_k}$ at level $k$.
Next, we define the measure $\mu_\omega^2$ on $X_2(\omega)$. Let
$r_\omega:\Omega_2(\omega)\rightarrow X_2(\omega)$,
$$r_\omega(d_{i_1,j_1}d_{i_2,j_2}\dots)=\sum_{k=1}^\infty R_2^{-k}d_{i_k,j_k}.$$
Define $\mu_x^2:=\mu_{\omega(x)}^2:=\mu_{\omega(x)}\circ r_{\omega(x)}^{-1}$.

\medskip

Note that the measure $\mu_x^2$ is the infinite convolution product $\delta_{R_2^{-1}B_2(i_1)}\ast\delta_{R_2^{-2}B_2(i_2)}\ast\dots$, where $\omega(x)=(i_1i_2\dots)$, $B_2(i_k):=\{d_{i_k,j} : 1\leq j\leq N_2\}$ and $\delta_A:=\frac{1}{\#A}\sum_{a\in A}\delta_a$, for a subset $A$ of $\br^{d-r}$. The following lemmas were proved in \cite{DJ07d}.

\begin{lemma}\label{lem1.23}\cite[Lemma 4.4]{DJ07d}
For any bounded Borel functions on $\br^d$,
$$\int_{T(R,B)}f\,d\mu=\int_{X_1}\int_{X_2(x)}f(x,y+g(x))\,d\mu_x^2(y)\,d\mu_1(x).$$
\end{lemma}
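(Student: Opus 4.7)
The plan is to derive the formula from the representation of $\mu$ as the pushforward of a product measure on the code space $B^{\bn}$, exploiting the quasi-product form of $B$ together with Fubini's theorem.

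First, I would write $\mu$ as the distribution of the random sum $\sum_{k=1}^\infty R^{-k} b_k$, where the $b_k$ are i.i.d.\ uniform on $B$; equivalently, $\mu = \pi_* \nu$ where $\pi\colon B^{\bn}\to T(R,B)$ is the coding map and $\nu$ is the product probability measure giving mass $1/N$ to each digit. Using the quasi-product form $B=\{(u_i, d_{i,j})^T\}$ with $\#B = N_1 N_2$, I would identify a digit $b_k\in B$ with a pair $(i_k, j_k)\in \{1,\dots,N_1\}\times\{1,\dots,N_2\}$; since $N = N_1 N_2$, the uniform measure $\nu$ on $B^{\bn}$ is precisely the product of the uniform measure $\nu_1$ on $\Omega_1 = \{1,\dots,N_1\}^{\bn}$ and, conditional on $\omega = (i_1,i_2,\dots)$, the product measure $\mu_\omega$ on $\Omega_2(\omega)$.

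Next, I would plug the block lower-triangular formula for $R^{-k}$ into the coding map. Using $b_k = (u_{i_k}, d_{i_k,j_k})^T$, a direct computation yields
\begin{equation*}
\pi\bigl((b_k)_k\bigr) = \Bigl(\sum_{k=1}^\infty R_1^{-k} u_{i_k},\; \sum_{k=1}^\infty D_k u_{i_k} + \sum_{k=1}^\infty R_2^{-k} d_{i_k,j_k}\Bigr) = \bigl(x(\omega),\; g(\omega) + r_\omega((d_{i_k,j_k})_k)\bigr).
\end{equation*}
Since the second coordinate conditional on $\omega$ is precisely the push-forward of $\mu_\omega$ under $r_\omega$ shifted by $g(\omega)$, applying Fubini to $\nu = \nu_1 \otimes \prod_\omega \mu_\omega$ gives
\begin{equation*}
\int_{T(R,B)} f\, d\mu \;=\; \int_{\Omega_1}\int_{X_2(\omega)} f\bigl(x(\omega),\, g(\omega)+y\bigr)\, d\mu_\omega^2(y)\, d\nu_1(\omega).
\end{equation*}

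The final step is to convert the outer integral over $\Omega_1$ into one over $X_1$ against $\mu_1$. By Lemma \ref{lem1.18}, the triple $(R_1, \pi_1(B), L_1(\ell_2))$ is a Hadamard triple on $\br^r$, so Theorem \ref{thDL15}(i) gives the no-overlap property for $\mu_1 = \mu(R_1,\pi_1(B))$. This guarantees that the coding map $\omega \mapsto x(\omega)$ is a $\nu_1$-almost-everywhere bijection onto $X_1$ whose push-forward of $\nu_1$ is exactly $\mu_1$, and the inverse map $x\mapsto \omega(x)$ is well-defined $\mu_1$-a.e. Substituting $\omega = \omega(x)$, recalling the definitions $g(x) := g(\omega(x))$, $X_2(x):=X_2(\omega(x))$ and $\mu_x^2 := \mu_{\omega(x)}^2$, yields the claimed identity.

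The main obstacle, though essentially routine, is the measurability bookkeeping around the coding $\omega(x)$: one must verify that the inner integral $\omega\mapsto \int f(x(\omega),\,g(\omega)+y)\,d\mu_\omega^2(y)$ descends to a well-defined $\mu_1$-measurable function of $x\in X_1$. This is where the no-overlap property from Theorem \ref{thDL15} is indispensable, since without it the map $\omega(x)$ could be multi-valued on a set of positive $\mu_1$-measure and both $g$ and $\mu_x^2$ would fail to be defined unambiguously.
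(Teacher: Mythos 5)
Your proof is correct and is essentially the argument intended here: the paper does not prove this lemma itself but cites \cite[Lemma 4.4]{DJ07d}, and your route --- realizing $\mu$ as the pushforward of the uniform Bernoulli measure on the code space $B^{\bn}$, splitting each digit into its quasi-product coordinates $(i_k,j_k)$ so that the Bernoulli measure factors as $\nu_1$ times the conditional measures $\mu_\omega$, computing the coding map with the block form of $R^{-k}$ to get $\bigl(x(\omega),\,g(\omega)+r_\omega(\cdot)\bigr)$, applying Fubini, and then transferring the outer integral from $\Omega_1$ to $(X_1,\mu_1)$ via the $\mu_1$-a.e.\ unique coding supplied by the no-overlap property --- is exactly the standard proof and matches the setup the paper lays out just before the lemma (including its use of Lemma \ref{lem1.18} and Theorem \ref{thDL15} to justify $\omega(x)$, $g(x)$ and $\mu_x^2$ being well defined).
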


\begin{lemma}\label{lem1.24}\cite[Lemma 4.5]{DJ07d}
If $\Lambda_1$ is a spectrum for the measure $\mu_1$, then
$$F(y):=\sum_{\lambda_1\in\Lambda_1}|\widehat\mu(x+\lambda_1,y)|^2=\int_{X_1}|\widehat\mu_s^2(y)|^2\,d\mu_1(s),\quad(x\in\br^r,y\in\br^{d-r}).$$
\end{lemma}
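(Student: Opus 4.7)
The plan is to reduce this identity to Parseval's formula applied to the spectrum $\Lambda_1$ of $\mu_1$, with the disintegration of $\mu$ from Lemma \ref{lem1.23} doing the reduction.

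First, I would apply the disintegration formula from Lemma \ref{lem1.23} to the Fourier-type test function $f(a,b) = e^{-2\pi i(\ip{a}{x+\lambda_1}+\ip{b}{y})}$. Since $g(s)$ is constant with respect to the inner variable of integration on $X_2(s)$, the factor $e^{-2\pi i\ip{g(s)}{y}}$ pulls out, and the inner integral becomes $\widehat{\mu_s^2}(y)$. This gives
\begin{equation*}
\widehat\mu(x+\lambda_1, y) = \int_{X_1} e^{-2\pi i\ip{s}{x+\lambda_1}}\,e^{-2\pi i\ip{g(s)}{y}}\,\widehat{\mu_s^2}(y)\,d\mu_1(s).
\end{equation*}

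Next, I would introduce the auxiliary function
\begin{equation*}
H(s) := e^{-2\pi i\ip{s}{x}}\,e^{-2\pi i\ip{g(s)}{y}}\,\widehat{\mu_s^2}(y),
\end{equation*}
so that the displayed formula reads $\widehat\mu(x+\lambda_1, y) = \langle H, e_{\lambda_1}\rangle_{L^2(\mu_1)}$, where $e_{\lambda_1}(s) := e^{2\pi i\ip{\lambda_1}{s}}$. Since $|\widehat{\mu_s^2}(y)| \leq 1$ uniformly in $s$ (as $\mu_s^2$ is a probability measure) and the two exponential prefactors in $H$ have modulus one, the function $H$ is bounded and therefore lies in $L^2(\mu_1)$.

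The final step is Parseval's identity: since $\{e_{\lambda_1}\}_{\lambda_1\in\Lambda_1}$ is an orthonormal basis for $L^2(\mu_1)$, we conclude
\begin{equation*}
\sum_{\lambda_1\in\Lambda_1}|\widehat\mu(x+\lambda_1,y)|^2 = \|H\|^2_{L^2(\mu_1)} = \int_{X_1}|\widehat{\mu_s^2}(y)|^2\,d\mu_1(s),
\end{equation*}
as the unit-modulus prefactors disappear when taking the modulus squared. Note that the right-hand side is manifestly independent of $x$, which is consistent with the formulation of the claim. I do not expect any serious obstacle in this argument; the only subtle observation is that the $x$-dependent and $g(s)$-dependent phase factors have modulus one and so contribute nothing to $|H(s)|^2$.
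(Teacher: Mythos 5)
Your argument is correct and is essentially the same proof as in the cited source \cite{DJ07d} (the paper itself states Lemma \ref{lem1.24} by citation only): you expand $\widehat\mu(x+\lambda_1,y)$ through the disintegration of Lemma \ref{lem1.23}, recognize it as the Fourier coefficient $\langle H,e_{\lambda_1}\rangle_{L^2(\mu_1)}$ of the bounded function $H$, and apply Parseval for the orthonormal basis $E(\Lambda_1)$. The two observations you flag --- that $|\widehat{\mu_s^2}(y)|\le 1$ puts $H$ in $L^2(\mu_1)$ and that the unit-modulus phases drop out of $|H(s)|^2$, making the result independent of $x$ --- are precisely the points needed, so there is no gap.
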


\medskip

We recall also the Jorgensen-Pedersen Lemma for checking in general when a countable set is a spectrum for a measure.

\begin{lemma}\label{lemJP}\cite{JP98}
Let $\mu$ be a compactly supported probability measure. Then $\Lambda$ is a spectrum for $L^2(\mu)$ if and only if
$$
\sum_{\lambda\in\Lambda}|\widehat{\mu}(\xi+\lambda)|^2\equiv 1.
$$
\end{lemma}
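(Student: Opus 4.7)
The plan is to test the orthonormal basis condition against the family of exponentials $f_\xi(x) := e^{2\pi i\ip{\xi}{x}}$, $\xi\in\br^d$. Each $f_\xi$ is a unit vector in $L^2(\mu)$ since $\mu$ is a probability measure, and its inner products with the candidate vectors $e_\lambda\in E(\Lambda)$ are themselves values of $\widehat\mu$:
\[
\ip{f_\xi}{e_\lambda}_{L^2(\mu)} = \int e^{2\pi i\ip{\xi-\lambda}{x}}\,d\mu(x) = \widehat\mu(\lambda-\xi).
\]
In particular, $\sum_{\lambda\in\Lambda}|\widehat\mu(\xi+\lambda)|^2$ is precisely the Bessel sum $\sum_{\lambda\in\Lambda}|\ip{f_{-\xi}}{e_\lambda}|^2$ of $f_{-\xi}$ against the candidate system $E(\Lambda)$.

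The forward implication is then immediate: if $E(\Lambda)$ is an orthonormal basis, Parseval applied to $f_{-\xi}$ gives $\sum_{\lambda\in\Lambda}|\widehat\mu(\xi+\lambda)|^2 = \|f_{-\xi}\|_{L^2(\mu)}^2 = 1$ for every $\xi\in\br^d$.

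For the converse, I would first extract orthonormality of $E(\Lambda)$ from the identity. Specializing it to $\xi = -\lambda_0$ for a fixed $\lambda_0\in\Lambda$ yields
\[
1 = \sum_{\lambda\in\Lambda}|\widehat\mu(\lambda-\lambda_0)|^2 = |\widehat\mu(0)|^2 + \sum_{\lambda\neq\lambda_0}|\widehat\mu(\lambda-\lambda_0)|^2,
\]
and since $\widehat\mu(0)=1$, every remaining summand must vanish; equivalently, $\ip{e_\lambda}{e_{\lambda_0}}_{L^2(\mu)} = \widehat\mu(\lambda_0-\lambda) = 0$ for $\lambda\neq\lambda_0$, while $\|e_{\lambda_0}\|_{L^2(\mu)}^2 = \widehat\mu(0) = 1$. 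Once this orthonormality is in hand, the hypothesis becomes equality in Bessel's inequality for each $f_\xi$, so $f_\xi\in\overline{\mathrm{span}}\,E(\Lambda)$ for every $\xi\in\br^d$.

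The main obstacle, and the only place where the compact support of $\mu$ is genuinely used, is deducing completeness of $E(\Lambda)$ from the fact that every exponential $f_\xi$ lies in its closed span. For this, I would show that the family $\{f_\xi : \xi\in\br^d\}$ is total in $L^2(\mu)$ by a Stone--Weierstrass argument: trigonometric polynomials on any cube $[-N,N]^d\supset\mathrm{supp}\,\mu$ separate points, are closed under complex conjugation, and contain the constants, so they are uniformly dense in $C(\mathrm{supp}\,\mu)$; and $C(\mathrm{supp}\,\mu)$ is in turn $L^2(\mu)$-dense by regularity of the finite Borel measure $\mu$. Combining, $\overline{\mathrm{span}}\,\{f_\xi\} = L^2(\mu)$, hence $\overline{\mathrm{span}}\,E(\Lambda) = L^2(\mu)$, which completes the argument.
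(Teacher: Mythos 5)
Your proof is correct and is essentially the standard argument for this lemma: the paper itself offers no proof, only the citation to \cite{JP98}, and the argument there runs the same way (Parseval for the forward direction; for the converse, orthonormality extracted by setting $\xi=-\lambda_0$, equality in Bessel's inequality to place every exponential in $\overline{\operatorname{span}}\,E(\Lambda)$, and totality of the exponentials in $L^2(\mu)$ via Stone--Weierstrass on the compact support plus density of $C(\operatorname{supp}\mu)$ in $L^2(\mu)$). The only cosmetic point is the separation-of-points step: trigonometric polynomials periodic on the cube $[-N,N]^d$ identify opposite boundary faces, so either take a strictly larger cube containing $\operatorname{supp}\mu$ or simply observe that $\operatorname{span}\{f_\xi:\xi\in\br^d\}$ is already a unital, conjugation-closed algebra separating points of $\br^d$, so Stone--Weierstrass applies directly on $\operatorname{supp}\mu$.
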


We need the following key proposition.

\begin{proposition}\label{lem1.25}
For the quasi-product form given in (\ref{R_4.1}) and (\ref{eq1.23.1}), there exists a lattice $\Gamma_2$ such that for $\mu_1$-almost every $x\in X_1$, the set $\Gamma_2$ is a spectrum for the measure $\mu_x^2$.
\end{proposition}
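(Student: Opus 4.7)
The plan is to choose a common candidate $\Gamma_2$ dictated by the Hadamard structure, verify orthogonality cleanly at every finite scale, and then handle completeness by coupling with the global measure $\mu$. Set $L_0 := \pi_2(L)$, which by Theorem \ref{th_quasi} is a common Hadamard partner in the sense that $(R_2, B_2(b_1), L_0)$ is a Hadamard triple for every $b_1 \in \pi_1(B)$. Using Lemma \ref{lem1.15-} together with a translation of the digit set $B$, we may assume $0 \in L_0$. Set
\[
\Lambda_n := \left\{\sum_{k=0}^{n-1}(R_2^T)^k \ell_k : \ell_k \in L_0\right\}, \qquad \Gamma_2 := \bigcup_{n \ge 0} \Lambda_n \subset \bz^{d-r}.
\]
The nesting $\Lambda_n \subset \Lambda_{n+1}$ follows from $0 \in L_0$, and crucially $\Gamma_2$ is intrinsic: because $L_0$ is the \emph{same} for every slice index, this candidate does not depend on $x$.

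For orthogonality, fix $x \in X_1$ with symbolic coding $\omega(x) = (i_1 i_2 \cdots)$ and let $\nu_n^x := \delta_{R_2^{-1}B_2(i_1)} * \cdots * \delta_{R_2^{-n} B_2(i_n)}$ be the partial convolution. Iterating the Hadamard identity $\sum_{\ell \in L_0}|m_i((R_2^T)^{-1}(\xi + \ell))|^2 = 1$, where $m_i(\xi) := \widehat{\delta_{B_2(i)}}(\xi)$, and using the $\bz^{d-r}$-periodicity of each $m_i$, yields by induction on $n$ the equality $\sum_{\lambda \in \Lambda_n}|\widehat{\nu_n^x}(\xi + \lambda)|^2 = 1$. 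Since $\mu_x^2 = \nu_n^x * (R_2^{-n})_*\mu_{\sigma^n x}^2$ we have $|\widehat{\mu_x^2}| \le |\widehat{\nu_n^x}|$, and monotone convergence in $n$ gives
\[
\sum_{\lambda \in \Gamma_2}|\widehat{\mu_x^2}(\xi + \lambda)|^2 \le 1 \qquad (\xi \in \br^{d-r})
\]
for every $x \in X_1$, which already gives the orthonormality of $E(\Gamma_2)$ in $L^2(\mu_x^2)$.

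The main obstacle is completeness, and my tactic is to integrate against $\mu_1$ and identify $\Lambda_1 \times \Gamma_2$ as a spectrum for $\mu$, where $\Lambda_1$ is furnished by the induction hypothesis of Theorem \ref{thmain} applied to the lower-dimensional Hadamard triple $(R_1, \pi_1(B), L_1(\ell_2))$. By Lemma \ref{lem1.24} and Fubini,
\[
\sum_{\gamma \in \Gamma_2}\sum_{\lambda_1 \in \Lambda_1}|\widehat\mu(x+\lambda_1,\, y+\gamma)|^2 = \int_{X_1}\sum_{\gamma \in \Gamma_2}|\widehat{\mu_s^2}(y+\gamma)|^2\, d\mu_1(s).
\]
If the left-hand side equals $1$ for all $(x,y)$, then because the integrand on the right is pointwise $\le 1$ by the previous paragraph, equality must hold $\mu_1$-a.e., and Lemma \ref{lemJP} gives the proposition. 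Establishing the Jorgensen--Pedersen identity on $\mu$ for the product spectrum $\Lambda_1 \times \Gamma_2$ is done via the quasi-product form together with Lemma \ref{lem1.16}, which forces admissible transitions on the distinguished fibers $(R_2^T)^k y_0$ to use only digits with $\pi_2(\ell)=0$ and so drives a self-similar recursion on $\widehat\mu$. The delicate point is that one must close this loop without invoking Theorem \ref{thmain} for $\mu$ itself: Proposition \ref{lem1.25} and the JP identity for $\Lambda_1 \times \Gamma_2$ on $\mu$ have to be developed in tandem, and the rigidity that makes this work is that $L_0$ is a complete set of representatives $\pmod{R_2^T\bz^{d-r}}$ appearing identically at every scale; combined with the no-overlap property (Theorem \ref{thDL15}(i)) this should drive the defect $1 - \sum_{\lambda \in \Lambda_n}|\widehat{\mu_x^2}(\xi+\lambda)|^2$ to zero as $n \to \infty$ and close the argument.
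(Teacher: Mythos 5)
Your orthogonality half is sound and essentially standard: since $L_0=\pi_2(L)$ is a common Hadamard partner for every $B_2(b_1)$ (Lemma \ref{lem1.18}), the iterated Parseval identity does give $\sum_{\lambda\in\Lambda_n}|\widehat{\nu_n^x}(\xi+\lambda)|^2=1$, hence the Bessel bound $\sum_{\lambda\in\Gamma_2}|\widehat{\mu_x^2}(\xi+\lambda)|^2\le 1$ for every $x$, which indeed yields orthogonality. The completeness half, however, is not a proof but a restatement of the difficulty. You propose to deduce equality from the Jorgensen--Pedersen identity for $\mu$ with $\Lambda_1\times\Gamma_2$; but that identity is precisely what the paper derives \emph{from} Proposition \ref{lem1.25} (together with the induction hypothesis for $\mu_1$) in the proof of Theorem \ref{thmain}, so as written your argument is circular. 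Lemma \ref{lem1.16} only constrains transitions on the fibers through $(R_2^T)^k y_0$; it gives no lower bound on $\sum_{\lambda\in\Lambda_n}|\widehat{\mu_x^2}(\xi+\lambda)|^2$, and the assertion that the defect ``should'' tend to zero is exactly the point at issue: the whole subtlety of the case $\mathcal Z\neq\emptyset$ is that candidate spectra built from the Hadamard partner by finite words can fail to be complete, so a genuinely new input is required to exclude a positive defect. Note also that the proposition asserts $\Gamma_2$ is a \emph{lattice}, while your $\bigcup_n\Lambda_n$ need not be one.

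The missing input is supplied in the paper by a non-circular device you did not use: replace the first component by $(N_1,\{0,1,\dots,N_1-1\})$, forming $(R^\dagger,B^\dagger)$. Because each $B_2(b_1)$ is a complete set of representatives modulo $R_2(\bz^{d-r})$, the set $B^\dagger$ is a complete set of representatives modulo $R^\dagger(\bz^{d-r+1})$, so by Lagarias--Wang the measure $\mu^\dagger$ is normalized Lebesgue measure on the self-affine tile $T(R^\dagger,B^\dagger)$, which tiles $\br^{d-r+1}$ by a set of the form $\bz\times\tilde\Gamma_2$; Fuglede's theorem for lattices then gives the spectrum $\bz\times\Gamma_2$ with $\Gamma_2$ a genuine lattice. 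Integrating the resulting Jorgensen--Pedersen identity for $\mu^\dagger$ over the first variable (Lemma \ref{lem1.24}, with $\bz$ a spectrum for Lebesgue measure on $[0,1]$ -- no induction hypothesis needed) and comparing with the pointwise Bessel bound forces equality for almost every slice; since the second-component slice measures of $(R,B)$ and $(R^\dagger,B^\dagger)$ coincide under the measure-preserving coding map $\Psi$, the conclusion transfers to $\mu_x^2$. To salvage your approach you would need an independent completeness argument of this tiling/Lebesgue type in place of the JP identity for $\mu$ itself.
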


\begin{proof}
First we replace the first component $(R_1,\pi_1(B))$ by a more convenient pair which allows us to use the theory of self-affine tiles from \cite{LW2}. Define
$$R^\dagger:=\begin{bmatrix}
N_1&0\\
0&R_2
\end{bmatrix},$$
$$B^\dagger=\left\{(i,d_{i,j})^T: 1\leq i\leq N_1-1, 1\leq j\leq |\det R_2|\right\}.$$
We will use the super-script $\dagger$ to refer to the pair $(R^\dagger,B^\dagger)$.

\medskip

As $d_{i,j}$ is a complete residue ($\mod R_2(\bz^{d-r})$), the set $B^\dagger$ is a complete set of representatives $(\mod R^\dagger(\bz^{d-r+1}))$. By \cite[Theorem 1.1]{LW2}, $\mu^\dagger$ is the normalized Lebesgue measure on $T(R^\dagger,B^\dagger)$ and
this tiles $\br^{d-r+1}$ with some lattice $\Gamma^{\ast}\subset{\mathbb Z}^{d-r+1}$. The attractor $X_1^{\dagger}$ corresponds to the pair $(N_1,\{0,1,\dots,N_1-1\})$ so $X_1^\dagger$ is $[0,1]$ and $\mu_1^\dagger$ is the Lebesgue measure on $[0,1]$.  We need the following claim:

 \medskip

 {\it Claim:} the set $T(R^\dagger, B^\dagger)$ actually tiles $\br^{d-r+1}$ with a set of the form $\bz\times \tilde{\Gamma}_2$, where $\tilde\Gamma_2$ is a lattice in $\br^{d-r}$.

 \medskip

\noindent {\it Proof of claim:} This claim was established implicitly in the proof of Theorem 1.1, in section 7, p.101 of \cite{LW2}, we present it here for completeness. Let $\Gamma^{\ast}$ be the lattice on ${\mathbb R}^{d-r+1}$ which is a tiling set of $T(R^\dagger, B^\dagger)$. We observe that the orthogonal projection of  $T(R^\dagger, B^\dagger)$ to the first coordinate is $[0,1]$. Hence, for any $\gamma\in\Gamma^{\ast}$, the orthogonal projection of $T(R^\dagger, B^\dagger)$ is $[0,1]+\gamma_1$, where $\gamma= (\gamma_1,\gamma_2)^T$. As $\Gamma^{\ast}\subset{\mathbb Z}^{d-r+1}$. these projections $[0,1]+\gamma_1$ are measure disjoint for different $\gamma_1$'s. Therefore, the tiling of $T(R^\dagger, B^\dagger)$ by $\Gamma^{\ast}$ naturally divides up into cylinders:
$$
U(\gamma_1): = ([0,1]+\gamma_1)\times {\mathbb R}^{d-r}.
$$
Focusing on one of the cylinders, say $U(0)$, this cylinder is tiled by $\tilde{\Gamma}$ where
$$
\tilde{\Gamma} = \Gamma\cap (\{0\}\times {\mathbb Z}^{d-r}).
$$
As ${\mathbb R}^{d-r+1} = \bigcup_{\gamma_1}U(\gamma_1)$, this means $T(R^\dagger, B^\dagger)$  also  tiles by ${\mathbb Z}\times\tilde{\Gamma}_2$. This completes the proof of the claim.
\medskip

 Because of the claim, it follows from the well-known result of Fuglede \cite{Fug74} that $\mu^\dagger$ has a spectrum of the form $\bz\times\Gamma_2$, with $\Gamma_2$ the dual lattice of $\tilde\Gamma_2$.

\medskip

We prove that $\Gamma_2$ is an orthogonal set for the measure ${\mu^\dagger}_s^2$ for $\mu_1^\dagger$-almost every $s\in X_1^\dagger$. Indeed, for $\gamma_2\neq 0$  in $\Gamma_2$, since $\bz\times\Gamma_2$ is a spectrum for $\mu^\dagger$, we have for all $\lambda_1\in\bz$, with Lemma \ref{lem1.23},
$$0=\int_{T(R^\dagger,B^\dagger)}e^{-2\pi i \ip{(\lambda_1,\gamma_2)}{(x,y)}}\,d\mu^\dagger(x,y)=\int_{X_1^\dagger}\int_{{X^\dagger}_2(x)}e^{-2\pi i\ip{(\lambda_1,\gamma_2)}{(x,y)}}\,d{\mu^\dagger}_x^2(y)\,d\mu_1^\dagger(x)$$
$$=\int_{X_1^\dagger}e^{-2\pi i\lambda_1x}\int_{{X^\dagger}_2(x)}e^{-2\pi i\ip{\gamma_2}{ y}}\,d{\mu^\dagger}_x^2(y)\,d\mu^\dagger_1(x).$$
This implies that that
$$\int_{{X^\dagger}_2(x)}e^{-2\pi i \ip{\gamma_2}{y}}\,d{\mu^\dagger}_x^2(y)=0,$$
for all $\gamma_2\in\Gamma_2$ for $\mu^\dagger_1$-a.e. $x\in X_1^\dagger$. This means that $\Gamma_2$ is an orthogonal sequence for ${\mu^\dagger}_x^2$ for $\mu^\dagger_1$-a.e. $x\in X_1^\dagger$ so
\begin{equation}
\sum_{\gamma_2\in\Gamma_2}|\widehat{{\mu^\dagger}_x^2}(y+\gamma_2)|^2\leq 1,\quad(y\in\br^{d-r}),
\label{eq1.25.1}
\end{equation}
for $\mu^\dagger_1$-a.e. $x\in X_1^\dagger$.
With Lemma \ref{lem1.25}, we have
$$1=\sum_{\gamma_2\in\Gamma_2}\sum_{\lambda_1\in\bz}|\widehat\mu^\dagger(x+\lambda_1,y+\gamma_2)|^2=\int_{X^\dagger_1}\sum_{\gamma_2\in\Gamma_2}|\widehat{{\mu^\dagger}_s^2}(y+\gamma_2)|^2\,d\mu^\dagger_1(s).$$
With \eqref{eq1.25.1}, we have
\begin{equation}
\sum_{\gamma_2\in\Gamma_2}|\widehat{{\mu^\dagger}_s^2}(y+\gamma_2)|^2= 1,\quad(y\in\br^{d-r}),
\label{eq1.25.2}
\end{equation}
for $\mu_1^\dagger$-a.e. $s\in X_1^\dagger$,
which means that $\Gamma_2$ is a spectrum for almost every measure ${\mu^\dagger}_s^2$ by Lemma \ref{lemJP}.

\medskip

Now, we are switching back to our original pair $(R,B)$. Note that we have the maps $x:\Omega_1\rightarrow X_1$ and $x^\dagger:\Omega_1\rightarrow X_1^\dagger$, defined by $\omega\mapsto x(\omega)$ as above in \eqref{eqxomega}, and analogously for $x^\dagger$. The maps are measure preserving bijections. Let $\Psi:X_1\rightarrow X_1^\dagger$ be the composition $\psi=x^\dagger\circ x^{-1}$. i.e.
 $$
 \Psi\left(\sum_{j=1}^{\infty} R_1^{-j}u_j\right) = \sum_{j=1}^{\infty} N_1^{-j}j.
 $$
 Consider the measure $\nu (E) = \mu_1^{\dagger}(\Psi(E))$ for Borel set $E$ in $T(R_1,\pi_1(B))$. Because of the no-overlap condition, we can check easily that $\nu$ and $\mu_1$ agrees on all the cylinder sets of $T(R_1,\pi_1(B))$. i.e.
 $$
 \nu(\tau_{i_1}\circ...\circ\tau_{i_n}(T(R_1,\pi_1(B)))) = \frac{1}{N^n} =  \mu_1(\tau_{i_1}\circ...\circ\tau_{i_n}(T(R_1,\pi_1(B))))
 $$
 for all $i_1,...,i_n\in\{0,1,...,N_1-1\}$. This shows that $\nu = \mu_1$ and therefore $\mu_1(E) = \mu_1^{\dagger}(\Psi(E))$ for any Borel set $E$. Consider the set
 $$
 {\mathcal N} = \{x\in T(R_1,\pi_1(B)): \Gamma_2 \ \mbox{is not a spectrum} \ \mbox{for} \ \mu_x^2 \}
 $$
 Then
 $$
 \Psi({\mathcal N}) = \{\Psi(x)\in X_1^{\dagger}: \Gamma_2 \ \mbox{is not a spectrum} \ \mbox{for} \ \mu_x^2 \}
 $$
Note also that, on the second component, the two pairs $(R,B)$ and $(R^\dagger,B^\dagger)$ are the same, more precisely $X_2(x)=X_2^\dagger(\Psi(x))$ and $\mu_x^2={\mu^\dagger}_{\Psi(x)}^2$ for all $x\in X_1$. This means that
$$
 \Psi({\mathcal N}) = \{\Psi(x)\in X_1^{\dagger}: \Gamma_2 \ \mbox{is not a spectrum} \ \mbox{for} \ \mu_{\Psi(x)}^2 \}
$$
which has $\mu_1^{\dagger}$-measure 0, by the arguments in the previous paragraph. Hence, $\mu_1(E) = \mu_1^{\dagger}(\Psi(E))=0$ and this completes the proof.
\end{proof}

\medskip

\begin{proof}[Proof of Theorem \ref{thmain}]

To prove Theorem \ref{thmain},  we use induction on the dimension $d$. We know from \cite{LaWa02,DJ06} that the result is true in dimension one (See also \cite[Example 5.1]{DL15} for an independent proof by considering ${\mathcal Z}$). Assume it is true for any dimensions less than $d$.

\medskip

First, after some conjugation, we can assume that $\bz[R,B]=\bz^d$, according to Proposition \ref{przrb}. Next, if the set in \eqref{Z} $\mathcal Z=\emptyset$, then the result follows from Theorem \ref{thDL15} (ii). Suppose now that ${\mathcal Z}\neq\emptyset$. Then, by Proposition \ref{pr1.14}, we can conjugate with some matrix so that $(R,B)$ are of the quasi-product form given in (\ref{R_4.1}) and (\ref{eq1.23.1}).

\medskip

By Theorem \ref{th_quasi}, $(R_1,\pi_1(B), L_1(\ell_2))$ forms a Hadamard triple with some $L$ on ${\mathbb R}^{r}$ where $1\leq r<d$. By induction hypothesis, the measure $\mu_1$ is spectral.
Let $\Lambda_1$ be a spectrum for $\mu_1$. By Proposition \ref{lem1.25}, there exists $\Gamma_2$ such that $\Gamma_2$ is a spectrum for $\mu_x^2$ for $\mu_1$-almost everywhere $x$.  Then we have, with \eqref{eq1.25.2}, and Lemma \ref{lem1.24},
$$\sum_{\gamma_2\in\Gamma_2}\sum_{\lambda_1\in\Lambda_1}|\widehat\mu(x+\lambda_1,y+\gamma_2)|^2=\int_{X_1}\sum_{\gamma_2\in\Gamma_2}|\widehat{{\mu}_s^2}(y+\gamma_2)|^2\,d\mu_1(s)=\int_{X_1}1d\mu_1(s)=1.$$
This means that $\Lambda_1\times\Gamma_2$ is a spectrum for $\mu$ by Lemma \ref{lemJP} and this completes the whole proof of Theorem \ref{thmain}.
\end{proof}

\medskip

 \begin{acknowledgements}
This work was partially supported by a grant from the Simons Foundation (\#228539 to Dorin Dutkay).
\end{acknowledgements}

\bibliographystyle{alpha}	
\bibliography{eframes}

\begin{thebibliography}{FMM06}

\bibitem[CCR96]{CCR}
D.~Cerveau, J.-P. Conze, and A.~Raugi.
\newblock Ensembles invariants pour un op\'erateur de transfert dans {${\bf
  R}^d$}.
\newblock {\em Bol. Soc. Brasil. Mat. (N.S.)}, 27(2):161--186, 1996.

\bibitem[DJ06]{DJ06}
Dorin~Ervin Dutkay and Palle E.~T. Jorgensen.
\newblock Iterated function systems, {R}uelle operators, and invariant
  projective measures.
\newblock {\em Math. Comp.}, 75(256):1931--1970 (electronic), 2006.

\bibitem[DJ07]{DJ07d}
Dorin~Ervin Dutkay and Palle E.~T. Jorgensen.
\newblock Fourier frequencies in affine iterated function systems.
\newblock {\em J. Funct. Anal.}, 247(1):110--137, 2007.

\bibitem[DL14]{DL14}
Dorin~Ervin Dutkay and Chun-Kit Lai.
\newblock Uniformity of measures with {F}ourier frames.
\newblock {\em Advances in. Math.}, 252:684--707, 2014.

\bibitem[DL15]{DL15}
Dorin~Ervin Dutkay and Chun-Kit Lai.
\newblock Self-affine spectral measures and frame spectral measures on {${\bf
  R}^d$}.
\newblock {\em submitted}, 2015.

\bibitem[DS15]{MR3318656}
Xin-Rong Dai and Qiyu Sun.
\newblock Spectral measures with arbitrary {H}ausdorff dimensions.
\newblock {\em J. Funct. Anal.}, 268(8):2464--2477, 2015.

\bibitem[Fal97]{Fal97}
Kenneth Falconer.
\newblock {\em Techniques in Fractal geometry}.
\newblock John Wiley \& Sons Ltd., 1997.
\newblock Mathematical foundations and applications.

\bibitem[FMM06]{MR2267631}
B{\'a}lint Farkas, M{\'a}t{\'e} Matolcsi, and P{\'e}ter M{\'o}ra.
\newblock On {F}uglede's conjecture and the existence of universal spectra.
\newblock {\em J. Fourier Anal. Appl.}, 12(5):483--494, 2006.

\bibitem[Fug74]{Fug74}
Bent Fuglede.
\newblock Commuting self-adjoint partial differential operators and a group
  theoretic problem.
\newblock {\em J. Funct. Anal.}, 16:101--121, 1974.

\bibitem[Hut81]{Hut81}
John~E. Hutchinson.
\newblock Fractals and self-similarity.
\newblock {\em Indiana Univ. Math. J.}, 30(5):713--747, 1981.

\bibitem[JP98]{JP98}
Palle E.~T. Jorgensen and Steen Pedersen.
\newblock Dense analytic subspaces in fractal {$L\sp 2$}-spaces.
\newblock {\em J. Anal. Math.}, 75:185--228, 1998.

\bibitem[KM06a]{MR2264214}
Mihail~N. Kolountzakis and M{\'a}t{\'e} Matolcsi.
\newblock Complex {H}adamard matrices and the spectral set conjecture.
\newblock {\em Collect. Math.}, (Vol. Extra):281--291, 2006.

\bibitem[KM06b]{MR2237932}
Mihail~N. Kolountzakis and M{\'a}t{\'e} Matolcsi.
\newblock Tiles with no spectra.
\newblock {\em Forum Math.}, 18(3):519--528, 2006.

\bibitem[Li14]{MR3163581}
Jian-Lin Li.
\newblock Analysis of {$\mu_M,D$}-orthogonal exponentials for the planar
  four-element digit sets.
\newblock {\em Math. Nachr.}, 287(2-3):297--312, 2014.

\bibitem[Li15]{MR3302160}
Jian-Lin Li.
\newblock Spectral self-affine measures on the spatial {S}ierpinski gasket.
\newblock {\em Monatsh. Math.}, 176(2):293--322, 2015.

\bibitem[LW97]{LW2}
Jeffrey~C. Lagarias and Yang Wang.
\newblock Integral self-affine tiles in {${\bf R}^n$}. {II}. {L}attice tilings.
\newblock {\em J. Fourier Anal. Appl.}, 3(1):83--102, 1997.

\bibitem[{\L}W02]{LaWa02}
Izabella {\L}aba and Yang Wang.
\newblock On spectral {C}antor measures.
\newblock {\em J. Funct. Anal.}, 193(2):409--420, 2002.

\bibitem[Mat05]{MR2159781}
M{\'a}t{\'e} Matolcsi.
\newblock Fuglede's conjecture fails in dimension 4.
\newblock {\em Proc. Amer. Math. Soc.}, 133(10):3021--3026 (electronic), 2005.

\bibitem[Sch86]{Sch86}
Alexander Schrijver.
\newblock {\em Theory of linear and integer programming}.
\newblock Wiley-Interscience Series in Discrete Mathematics. John Wiley \&
  Sons, Ltd., Chichester, 1986.
\newblock A Wiley-Interscience Publication.

\bibitem[Str98]{Str98}
Robert~S. Strichartz.
\newblock Remarks on: "dense analytic subspaces in fractal $l^2$-spaces''.
\newblock {\em J. Anal. Math.}, 75:229--231, 1998.

\bibitem[Str00]{Str00}
Robert~S. Strichartz.
\newblock Mock {F}ourier series and transforms associated with certain {C}antor
  measures.
\newblock {\em J. Anal. Math.}, 81:209--238, 2000.

\bibitem[Str06]{MR2279556}
Robert~S. Strichartz.
\newblock Convergence of mock {F}ourier series.
\newblock {\em J. Anal. Math.}, 99:333--353, 2006.

\bibitem[Tao04]{Tao04}
Terence Tao.
\newblock Fuglede's conjecture is false in 5 and higher dimensions.
\newblock {\em Math. Res. Lett.}, 11(2-3):251--258, 2004.

\bibitem[YL15]{MR3273183}
Ming-Shu Yang and Jian-Lin Li.
\newblock A class of spectral self-affine measures with four-element digit
  sets.
\newblock {\em J. Math. Anal. Appl.}, 423(1):326--335, 2015.

\end{thebibliography}

\end{document}